\documentclass[preprint, 11pt]{elsarticle}

\usepackage[T1]{fontenc}
\usepackage[utf8]{inputenc}
\usepackage[hmargin=0.9in]{geometry}
\usepackage[babel,german=quotes]{csquotes}
\usepackage{subcaption}
\usepackage{graphicx}
\usepackage[colorlinks=true,citecolor=blue,urlcolor=blue]{hyperref}
\usepackage{caption}
\usepackage{amsmath}
\usepackage{amsthm}
\usepackage{amssymb}
\usepackage{bm, amsfonts}
\usepackage{xcolor,soul}
\usepackage{fixmath}
\usepackage{tikz}
\usepackage{bm}
\usepackage{titlesec}
\usepackage{multirow}
\usepackage{indentfirst}
\usepackage{mathtools}
\usepackage{float}
\usepackage[normalem]{ulem}
\usepackage{comment}
\useunder{\uline}{\ul}{}

\newcommand{\diff}{\mathrm{d} }
\newcommand{\R}{\mathbb{R}}
\newcommand{\N}{\mathcal{N}}

\newcommand{\x}{\mathbf{x}}
\newcommand{\s}{\mathbf{s}}
\newcommand{\vel}{\mathbf{v}}
\newcommand{\n}{\mathbf{n}}
\newcommand{\f}{\mathbf{f}}

\newcommand{\con}{\mathbf{c}}

\newcommand{\D}{\mathcal{D}}
\newcommand{\dv}{\nabla\cdot}
\newcommand{\Sp}{\mathbb{S}_{d-1}}
\newcommand{\OO}{\bm{\Omega}}
\newcommand{\pz}{\psi^{(0)}}
\newcommand{\po}{\boldsymbol{\psi}^{(1)}}
\newcommand{\pt}{\boldsymbol{\psi}^{(2)}}
\newcommand{\sprod}[2]{\left\langle #1, #2 \right\rangle}

\usepackage{xcolor}
        %\textcolor{declared-color}{text}    OR   {\color   text}
        %The difference between \textcolor and \color is the same as that
        %between \texttt and \ttfamily, you can use the one you prefer. The
        %\color environment allows the text to run over multiple lines and
        %other text environments whereas the text in \textcolor must all be
        %one paragraph and not contain other environments.

        %\colorbox{declared-color}{text}   will change background color

\setcounter{secnumdepth}{4}
\colorlet{shadecolor}{green}

\titleformat{\paragraph}
{\normalfont\normalsize\itshape}{\theparagraph}{1em}{}
\titlespacing*{\paragraph}
{0pt}{3.25ex plus 1ex minus .2ex}{1.5ex plus .2ex}

\newdefinition{remark}{Remark}
\newtheorem{theorem}{Theorem}
\newtheorem{lemma}{Lemma}

\makeatletter
\def\ps@pprintTitle{%
\let\@oddhead\@empty
\let\@evenhead\@empty
\def\@oddfoot{
\footnotesize\itshape
\ifx\@journal\@empty Elsevier
\else\@journal\fi
\hfill\today
}%
\let\@evenfoot\@oddfoot}
\makeatother

\begin{document}

\begin{frontmatter}
\title{Realizability-preserving monolithic convex limiting in continuous Galerkin discretizations of the $M_1$ model of radiative transfer}
\author[TUD]{Paul Moujaes\corref{cor1}}
\ead{paul.moujaes@math.tu-dortmund.de}

\author[TUD]{Dmitri Kuzmin}
\ead{kuzmin@math.uni-dortmund.de}

\address[TUD]{Institute of Applied Mathematics (LS III), TU Dortmund University\\ Vogelpothsweg 87,
  D-44227 Dortmund, Germany}

\author[WPE,WTZ,DKTK,TUDOPhy]{Christian Bäumer}

\address[WPE]{West German Proton Therapy Centre Essen
(WPE) gGmbH\\ Am Mühlenbach 1, 45147 Essen, Germany}
\address[WTZ]{West German Cancer Center (WTZ), Hufelandstr. 55, 45147 Essen, University Hospital Essen, Essen, Germany}
\address[DKTK]{German Cancer Consortium (DKTK), Hufelandstr. 55, 45147 Essen, Germany }
\address[TUDOPhy]{\\Department of Physics, TU Dortmund University, Otto-Hahn-Str. 4, 44227 Dortmund, Germany}
\ead{Christian.Baeumer@uk-essen.de}

\cortext[cor1]{Corresponding author}

\journal{}

\begin{abstract}
  We discretize the $M_1$ model of radiative transfer using continuous finite elements and propose a tailor-made monolithic convex limiting (MCL) procedure for  enforcing physical realizability. The $M_1$ system of nonlinear balance laws for the zeroth and first moments of a probability distribution function is derived from the linear Boltzmann equation and equipped with an entropy-based closure for the second moment. To ensure hyperbolicity and physical admissibility, evolving moments must stay in an invariant domain representing a convex set of realizable states. We first construct a low-order method that is provably invariant domain preserving (IDP). Introducing intermediate states that represent spatially averaged exact solutions of homogeneous Riemann problems, we prove that these so-called bar states are realizable in any number of space dimensions. This key auxiliary result enables us to show the IDP property of a fully discrete scheme with a diagonally implicit treatment of reactive terms. To achieve high resolution, we add nonlinear correction terms that are constrained using a two-step MCL algorithm. In the first limiting step, local bounds are imposed on each conserved variable to avoid spurious oscillations and maintain positivity of the scalar-valued zeroth moment (particle density). The second limiting step constrains the magnitude of the vector-valued first moment to be realizable. The flux-corrected finite element scheme is provably IDP. Its ability to prevent nonphysical behavior while attaining high-order accuracy in smooth regions is verified in a series of numerical tests. The developed methodology provides a robust simulation tool for dose calculation in radiotherapy.
\end{abstract}

\begin{keyword}
  radiative transfer, realizable moment models,
  hyperbolic balance laws, finite elements,
  invariant domain preservation, flux limiting
\end{keyword}

\end{frontmatter}

%%%
%%% Introduction
%%%
\section{Introduction}
\label{sec:intro}

Radiative transfer models based on the linear Boltzmann equation (LBE) are widely used in computational radiotherapy  \cite{bedford2019,gifford2006,vassiliev2010}
and other fields of medical physics. The transported variable of the LBE is a \emph{fluence} that depends on space, time, energy, and direction of travel. Mathematically, LBE has the structure of a Fokker-Planck equation for a nonnegative probability density function. Practical use of deterministic LBE models as a healthcare simulation tool is currently restricted by exorbitant computational cost. An efficient alternative is provided by \emph{moment models}, in which dependence on the angular variable is eliminated and the dimensionality of the problem is reduced \cite{schneider2020,schneider2022}. Since such model reduction leads to a system with more unknowns than equations, a closure relation is required to express higher-order moments in terms of the retained ones. In entropy-based closures, the highest moment is modeled by a solution of an entropy optimization problem~\cite{alldredge2012, brunner2000, brunner2001, coulombel2006, frank2012, hauck2011, levermore1996} or an approximation thereof~\cite{chidyagwai2018, levermore1984, minerbo1978, monreal2013, pichard2017}. The reduced models are
strongly nonlinear and physically meaningful only if reconstructed moments correspond to a nonnegative particle distribution.

The inexpensive $M_1$ model \cite{chu2019,monreal2013} has already proven its worth in the context of dose calculations for radiotherapy \cite{barnard2012,birindelli2019,duclous2010,frank2007,pichard2016}. For the underlying closure to be meaningful, the zeroth moment must remain positive, while the magnitude of the first moment must be bounded above by the zeroth moment~\cite{berthon2007, chu2019, kershaw1976,olbrant2012}.
These constraints define the set of admissible states, which forms a convex cone and is referred to as \textit{realizable set}.
To maintain physical consistency, numerical approximations must remain within this set. In the general context of  hyperbolic problems,
discretizations that guarantee this property are referred to as \textit{invariant domain preserving} (IDP)~\cite{guermond2016,kuzmin2023}.

While recent years witnessed significant advances in the development of IDP methods for hyperbolic flow models, the application of these techniques to the $M_1$ model requires careful extensions and further analysis. 
Adapting property-preserving methods to the $M_1$ system poses
additional challenges due to the forcing terms resulting from particle sources as well as scattering and absorption processes.
In the context of discontinuous Galerkin methods, flux and/or slope limiters
can be applied to the discretized
$M_1$ system~\cite{alldredge2015,chu2019,olbrant2012}, but yield unsatisfactory results in some cases~\cite{chidyagwai2018}.

In this work, we extend the monolithic convex limiting (MCL) framework
introduced in~\cite{kuzmin2020} to a continuous finite element discretization
of the inhomogeneous $M_1$ model. The underlying low-order scheme
preserves invariant domains by design.
Key to its derivation are the so-called \textit{bar states}, which represent spatial averages of exact solutions of the homogeneous Riemann problem~\cite{guermond2016}.
Since realizability of the exact Riemann solutions is only proved in one space dimension~\cite{coulombel2006}, we provide an alternative proof to ensure admissibility of the bar states also in the multidimensional case. 
We treat the reactive term that results from absorption and scattering implicitly, while employing explicit strong stability preserving Runge--Kutta (SSP-RK) methods~\cite{gottlieb2001, shu1989}. %but treat the reaction term that results from absorption and scattering implicitly.
By lumping the discrete reaction operator, we avoid solving a linear system in each forward Euler stage.
%that accounts for particle sources as well as the reaction term that results from absorption and scattering, which we treat implicitly. 
The low-order method serves as the foundation for constructing high-resolution IDP schemes for the $M_1$ system. To ensure numerical stability in the vicinity of shocks and steep fronts, we limit the antidiffusive fluxes that recover the high-order target scheme. The proposed MCL strategy constrains each component of a flux-corrected bar state individually before performing a synchronized IDP fix. The involved limiting steps are similar to those of sequential MCL algorithms for the compressible Euler equations \cite{kuzmin2020,kuzmin2023}.

We begin in Section~\ref{sec:M1} by presenting the $M_1$ model and reviewing some physical properties. In Section~\ref{sec:LO}, we design a low-order discretization that is IDP for all physically admissible particle sources.
Scattering and absorption terms are taken into account in a manner
consistent with the requirement of realizability.
In Section~\ref{sec:MCL}, we introduce our customized MCL scheme for the $M_1$ model.
Finally, we present the results of our numerical experiments in Section~\ref{sec:examples} and draw conclusions in Section~\ref{sec:concl}.

%%% 
%%% M1-model
%%%
\section{$M_1$ moment model} \label{sec:M1}

Let $\psi=\psi(\x,t,\OO)$ denote a probability density (fluence) that
depends on space location $\x\in\D\subset\R^d$, $d\in\{ 1,2,3\}$, time
instant $t\geq 0$, and orientation $\OO\in\Sp$, where
$\Sp = \{\OO\in\R^d: |\OO|= 1\}$ is the unit sphere.
In the context of radiation transport modeling,
$\psi(\x,t,\cdot):\Sp\to\R_+$ represents
 the angular distribution of particles at a fixed space-time
 location $(\x,t)$. In what follows, we write ``$\gneq$'' if we
 assume that $\psi(\x,t,\cdot)\in L^2(\Sp)$ is nonnegative
 with $\|\psi(\x,t,\cdot)\|_{L^1(\Sp)}>0$.

A detailed description of radiative transfer is provided by
LBE models of the form ~\cite{alldredge2012,monreal2013}
\begin{equation}\label{eq:Boltzmann}
    \frac{\partial \psi}{\partial t} + \OO \cdot \nabla\psi = - (\sigma_s + \sigma_a)\psi + \frac{\sigma_s}{4\pi}\int_{\Sp}\psi(\OO')\,\diff\OO'+Q,
\end{equation}
where $Q=Q(\x,t,\OO)$ is a nonnegative source of particles. The absorption and scattering properties of the background material are characterized by 
$\sigma_a\geq 0$ and $\sigma_s\geq0$, respectively.

In principle, approximate solutions to \eqref{eq:Boltzmann} can be
obtained using numerical methods for transport-reaction equations
(see, e.g.,~\cite{godoy2010,hansel2018,yuan2016}). However, the cost of evolving
$\psi(\x,t,\OO)$ is very high considering that the domain
$\D\times \R_+\times\Sp$ is six-dimensional for $d=3$. Therefore,
it is common practice to approximate \eqref{eq:Boltzmann} by nonlinear
evolution equations for $N+1$ angular moments
$$
\psi^{(n)} = \psi^{(n)}(\x,t) = \int_{\Sp}\underbrace{\OO \otimes \cdots \otimes \OO}_{n\text{ times}}
\psi(\x,t,\OO)\,\diff\OO,\qquad n=0,\ldots,N.
$$
The system of equations for $\psi^{(0)},\ldots,\boldsymbol{\psi}^{(N)}$
is referred to as the \emph{$M_N$ model}. In this work, we
focus on the
numerical treatment of the $M_1$ model, i.e., of balance laws that
govern the evolution of
\begin{align}
    \pz = \pz(\x,t) &= \int_{\Sp} \psi(\x,t,\OO)\,\diff\OO\,\in\R,\label{eq:0moment}\\
    \po = \po(\x,t) &= \int_{\Sp} \OO \psi(\x,t,\OO)\,\diff\OO\,\in\R^d.\label{eq:1moment}
\end{align}
The zeroth moment~\eqref{eq:0moment} corresponds to the total particle density, while the first moment~\eqref{eq:1moment} is the momentum density of particle motion. The flux of momentum is given by the second moment
\begin{equation*}
  \pt=\pt(\x,t) = \int_{\Sp} \OO\otimes\OO \psi(\x,t,\OO)\,\diff\OO\,\in\R^{d\times d},
\end{equation*}
which represents a derived quantity and is calculated using a closure
approximation (see below).

The $M_1$ model of radiative transfer is a
nonlinear hyperbolic system of the form
\begin{equation}\label{eq:M1}
    \frac{\partial u}{\partial t} + \dv \f(u) = -\sigma u + q.
\end{equation}
The vector $u$ of conserved quantities and the matrix
 $\f(u)$ of corresponding fluxes are given by
\begin{equation*}
  u=\left(\begin{array}{cc}
         \pz\\
         \po
  \end{array}\right)\in\R^{d+1},
  \qquad
    \f(u) = \left(\begin{array}{cc}
         \po\\
         \pt
    \end{array}\right)\in\R^{d\times (d+1)}.
\end{equation*}
Note that $\pz$ is transported by $\po$, while
$\po$ is transported by $\pt$.
The  diagonal  tensor
$$\sigma = \mathrm{diag}(\sigma_a, \sigma_t,..., \sigma_t)\in \R^{(d+1)\times(d+1)},
\qquad \sigma_t = \sigma_a +\sigma_s$$
 and the source term
$q = (q^{(0)}, \mathbf{q}^{(1)})^\top\in\R^{d+1}$
 are inferred from the linear Boltzmann equation \eqref{eq:Boltzmann}.

For the second moment, we use the standard closure approximation~\cite{levermore1984}
\begin{equation}\label{eq:M1closure}
    \pt = D\left(\vel\right)\pz, \quad \vel = \frac{\po}{\pz},
\end{equation}
where
\begin{equation}\label{eq:D}
    D(\vel) = \frac{1-\chi(|\vel|)}{2} I_d + \frac{3\chi(|\vel|)-1}{2} \frac{\vel \otimes \vel}{|\vel|^2}
\end{equation}
is the Eddington tensor and
\begin{equation}\label{eq:chi}
    \chi(f) = \frac{3+4f^2}{5+2\sqrt{4-3f^2}}
\end{equation}
is the Eddington factor.
Note that for $d = 1$ the Eddington tensor~\eqref{eq:D} reduces to~\eqref{eq:chi}.

\begin{remark}
  In general, the $n$-th moment is transported by the $(n+1)$-st moment.
Thus, the $M_N$ system requires a closure for $\bm{\psi}^{(N+1)} \coloneqq \bm{\psi}^{(N+1)} ( \pz, ...,\bm{\psi}^{(N)})$.
To ensure physical admissibility, the choice of closure approximations
must guarantee that if $\pz, ...,\bm{\psi}^{(N)}$ are 
moments of a nonnegative distribution $\psi$, then so is
$\bm{\psi}^{(N+1)}$. To that end, $\bm{\psi}^{(N+1)}$ can be defined as
the solution of an entropy minimization problem or an approximation thereof~\cite{alldredge2012, chidyagwai2018, coulombel2006, levermore1984, pichard2017}. 
However, solving optimization problems of this kind is costly. 
Moreover, numerical solvers can introduce errors, which may result in a loss of physical admissibility. 
For details, we refer the interested reader to~\cite[Sec. 3.4]{pichard2017}.
\end{remark}

The moments $\pz$ and $\po$ correspond to a nonnegative probability density
$\psi$ if and only if~\cite{kershaw1976}
\begin{equation}\label{eq:m01realizable}
    \pz >0\quad\text{and}\quad f = |\vel|=\frac{|\po|}{\pz}<1.
\end{equation}
If this requirement is met, we
refer to $\pz$ and $\po$ as \textit{realizable} or say that $\psi$ \textit{realizes} $\pz$ and $\po$.
\medskip

In addition to the validity of conditions \eqref{eq:m01realizable}, we assume that  
$\pt$ is defined by \eqref{eq:M1closure}--\eqref{eq:chi} with
\begin{equation*}
    f^2\leq\chi(f)\leq 1\quad \text{for }f\in[0,1).
\end{equation*}
Under these assumptions, Levermore~\cite{levermore1984} has shown that $\pz,\po,$ and $\pt$ correspond to the zeroth, first, and second moments of a nonnegative function, respectively.

In view of~\eqref{eq:m01realizable}, we define the set of physically admissible states for the $M_1$ model~\eqref{eq:M1} as
\begin{equation}\label{eq:realizableset}
\begin{split}
    \mathcal{R}_1 &=\left\{(\pz,\po)^{\top}\in\R^{d+1}: \pz >0, |\po|< \pz\right\}\\
    &= \left\{\int_{\Sp}
\left(\begin{array}{cc}
         1\\
         \OO
  \end{array}\right)
%(1,\OO^{\top})^{\top}
\psi(\OO)\,\diff\OO,\ \ \psi(\OO)\gneq0\right\}.
\end{split}
\end{equation}
This set is a convex cone that is referred to as \textit{realizable set}.
Furthermore, the $M_1$ model is hyperbolic for all $u\in\mathcal{R}_1$~\cite{berthon2007,levermore1996}.
That is, the directional Jacobian of the flux function
\begin{equation}\label{eq:dirJac}
    \f'_{\n}(u) = \frac{\partial}{\partial u}\left( \f(u)\cdot \n\right)\,\in\R^{(d+1)\times(d+1)}
\end{equation}
is diagonalizable with real eigenvalues for all $u\in\mathcal{R}_1$  and all directions $\n\in\Sp$.
However, hyperbo\-licity of the $M_1$ model breaks down on the boundary of the realizable set~\eqref{eq:realizableset} because the directional Jacobian~\eqref{eq:dirJac} is not diagonalizable for $|\po| =\pz$~\cite{chidyagwai2018}.
Therefore, it is essential for the design of numerical schemes to produce solutions that remain in the interior of $\mathcal{R}_1$.

\begin{remark}
  The requirement that $|\po|$ be bounded by $\pz$ is often referred to as
  \textit{flux limiting} condition. This terminology was introduced in the
  frequently cited paper~\cite{kershaw1976}. To avoid confusion with 
  limiting for numerical fluxes, we call $|\po|<\pz$ the
  \textit{realizable velocity} condition.
\end{remark}

\begin{remark}
    Note that the only nonnegative particle distributions $\psi\geq0$ that map to the boundary of the realizable set~\eqref{eq:realizableset} are the trivial distribution $\psi(\OO)\equiv 0$ a.e. on $\Sp$ and Dirac delta distributions on the unit sphere~\cite{kershaw1976}. 
    Clearly, delta distributions do not belong to $L^2(\Sp)$.
\end{remark}
%\begin{remark}
%    Without delving into technicalities, we note that for the definition of the realizable %set~\eqref{eq:realizableset} it is sufficient that $\psi(\OO) > 0$ holds almost everywhere.
%\end{remark}

%%% 
%%% LO discretization
%%%
\section{Low-order discretization}
\label{sec:LO}
In algebraic flux correction schemes for hyperbolic problems, invariant
domain preserving low-order methods serve as building blocks for high-order
extensions constrained by limiters \cite{kuzmin2023}. An invariant domain of the $M_1$ model
\eqref{eq:M1} is the realizable set $\mathcal{R}_1$ defined
by~\eqref{eq:realizableset}. In this
section, we design a low-order continuous finite element method
that produces numerical solutions belonging to $\mathcal{R}_1$.

Multiplying the residual of the $M_1$ system~\eqref{eq:M1} by a test function $w$, assuming sufficient regularity, and integrating over the spatial domain $\D\subset\R^d$, we obtain the weak formulation
\begin{equation}\label{eq:weakform}
    \int_\D w\left(\frac{\partial u}{\partial t}+\dv \f(u) + \sigma u - q\right)\,\diff\x = \int_\Gamma w\left(\f(u)\cdot \n - \mathcal{F}(u, \hat u; \n) \right)\,\diff\s,
\end{equation}
where $\mathcal{F}(u,\hat u;\n)$ is a numerical approximation to the normal flux $\f(u)\cdot\n$ across $\Gamma = \partial\D$.
%By chosing the external state $\hat u$ according
Problem-dependent boundary conditions are imposed in a weak sense 
by choosing the external state $\hat u$ of the  approximate Riemann solver
accordingly.
In this work,  we use the global Lax--Friedrichs flux
\begin{equation*}
    \mathcal{F}(u_L, u_R; \n) = \frac{\f(u_L) + \f(u_R)}{2} \cdot \n -\frac{\lambda_{\max}}{2} (u_R - u_L).
\end{equation*}
The constant $\lambda_{\max}$ is an upper bound for the global maximum wave speed. The wave speeds of the realizable $M_1$ model are bounded above by unity~\cite{berthon2007, chidyagwai2018, olbrant2012}.
Thus, we set $\lambda_{\max} = 1$.

We discretize~\eqref{eq:weakform} in space using the continuous Galerkin (CG) method on a conforming triangulation $\mathcal{T}_h =\{K_1,\ldots,K_{E_h}\}$ consisting of $E_h$ nonoverlapping elements such that $\overline{\D} = \cup_{e=1}^{E_h} K_e$. The vertices of  $\mathcal{T}_h$ are denoted by $\x_1,\ldots,\x_{N_h}$.
The Lagrange basis functions $\varphi_1,\ldots, \varphi_{N_h}$ of
 a piecewise-$\mathbb{P}_1/\mathbb{Q}_1$ finite element approximation have the property that $\varphi_i(\x_j) = \delta_{ij}$. We seek 
\begin{equation}\label{eq:CGapprox}
    u_h(\x,t) = \sum_{j = 1}^{N_h} u_j(t) \varphi_j(\x)
\end{equation}
in the space $V_h=\mbox{span}\{\varphi_1,\ldots, \varphi_{N_h}\}\subseteq H^1(\mathcal D)\cap C(\bar{\mathcal D})$. The flux $\f(u_h)$ is approximated by
\begin{equation}\label{eq:CGapproxf}
    \f_h(u_h) = \sum_{j = 1}^{N_h}\f_j \varphi_j,\quad \f_j = \f(u_j).
\end{equation}

Let $\N_i =\{j\in \{1,\ldots, N_h\}: \mathrm{supp}(\varphi_i) \cap\mathrm{supp}(\varphi_j)\not= \emptyset\}$ and $\N_i^* = \N_i\setminus\{i\}$ denote the computational stencils associated with node $i\in \{1,\ldots, N_h\}$. Denote the 
 $L^2$ scalar products by
\begin{equation*}
    \sprod{u}{v}_\D= \int_\D u v\,\diff\x, \quad \sprod{u}{v}_\Gamma =  \int_\Gamma u v\,\diff\s.
\end{equation*}
Substituting the approximations~\eqref{eq:CGapprox} and~\eqref{eq:CGapproxf} into~\eqref{eq:weakform} with $w = \varphi_i$, we obtain
\begin{equation}\label{eq:stdCG}
    \sum_{j\in\N_i} m_{ij} \frac{\diff u_j}{\diff t} = b_i(u_h,\hat u) - \sum_{j\in\N_i} \left[\f_j\cdot \con_{ij} + m_{ij}^{\sigma} u_j  \right] + s_i.
\end{equation}
The coefficients of this semi-discrete problem are given by
\begin{equation}\label{eq:consistentmass}
    m_{ij} = \sprod{\varphi_i}{\varphi_j}_\D,\quad m_{ij}^\sigma = \sprod{\varphi_i}{\sigma \varphi_j}_\D,\quad \con_{ij} =\sprod{\varphi_i}{\nabla\varphi_j}_\D, %\int_\D \varphi_i \nabla\varphi_j\,\diff\x
\end{equation}

\begin{equation}\label{eq:consistentbdr}
    b_i(u_h, \hat u) = \sprod{\varphi_i}{\f(u_h) \cdot \n - \mathcal{F}(u_h, \hat u;\n)}_\Gamma,\quad s_i = \sprod{\varphi_i}{q}_\D.
    %b_i(u_h, \hat u) = \int_\Gamma \varphi_i \left(\f(u_h) \cdot \n - \mathcal{F}(u_h, \hat u;\n)\right)\,\diff\s.
\end{equation}

To derive a low-order IDP scheme, we
proceed as in~\cite{guermond2016,kuzmin2020,kuzmin2023}.
Using the \textit{partition of unity} property $\sum_{j=1}^{N_h} \varphi_j\equiv 1$ of the Lagrange basis, we approximate $m_{ij}$ and $m_{ij}^\sigma$ by
 $\delta_{ij}m_i$ and $\delta_{ij}m_i^\sigma$ with
\begin{equation*}
    m_i = \sum_{j\in\N_i} m_{ij} = \sprod{\varphi_i}{1}_\D >0,\quad %\int_\D\varphi_i\,\diff\x>0, \quad 
    m_i^\sigma = \sum_{j\in\N_i} m_{ij}^\sigma = \sprod{\varphi_i}{\sigma}_D\geq 0.
\end{equation*}
That is, we lump the mass matrices.
Similarly, the boundary term~ $b_i(u_h, \hat u)$ is approximated by
\begin{equation}\label{eq:lumpedbdr}
    \tilde{b}_i(u_i, \hat u) = \sprod{\varphi_i}{\f_i \cdot \n - \mathcal{F}(u_i, \hat u_i;\n)}_\Gamma. % \int_{\Gamma} \varphi_i \left( \f_i \cdot \n -\mathcal{F}(u_i,\hat u;\n)\right)\,\diff\s.
\end{equation}
To stabilize the CG discretization of $\nabla\cdot\f(u)$, we define the \emph{graph viscosity} coefficients
\begin{equation*}
    d_{ij} = \begin{cases}
        \lambda_{\max}\max\{|\con_{ij}|, |\con_{ji}|\} &\text{if }j\in\N_i^*,\\
        -\sum_{k\in\N_i^*}d_{ik} &\text{if } j=i,\\
        0&\text{otherwise}
    \end{cases}
\end{equation*}
using the maximum speed $\lambda_{\max} = 1$ of the realizable
$M_1$ model. The addition of diffusive fluxes $d_{ij} (u_j -u_i)$
to the lumped counterpart of \eqref{eq:stdCG}
yields the semi-discrete low-order scheme
\begin{equation}\label{eq:sdLO}
    m_i\frac{\diff u_i}{\diff t} = \tilde{b}_i(u_i, \hat u_i) + \sum_{j \in\N_i^*} \left[d_{ij} (u_j -u_i) - (\f_j-\f_i)\cdot \con_{ij}\right] - m_i^{\sigma}u_i + s_i,
\end{equation}
which represents an extension of the
 Lax--Friedrichs method to continuous finite elements \cite{kuzmin2023}.
 \smallskip
 
We show the IDP property for a fully discrete version of~\eqref{eq:sdLO} by splitting the remainder of this section into three parts.
First, we analyze the homogeneous system, i.e., \eqref{eq:sdLO}
with $m_{i}^\sigma=0$ and $s_i=0$. Next, we include
$s_i\ne 0$ corresponding to a physically admissible source $q$
in \eqref{eq:M1}. Finally, we show that implicit treatment of
the reactive term $m_i^\sigma u_i$ guarantees the IDP property of 
the fully discrete low-order scheme if the remaining terms
are treated explicitly and the time step is sufficiently small.

\subsection{Homogeneous $M_1$ model}
If $\sigma\equiv 0$ and $q\equiv 0$ in the $M_1$ system~\eqref{eq:M1},
then $m_{i}^\sigma=0$ and $s_i\equiv 0$ in \eqref{eq:sdLO}. Suppose that $i$ is an
internal node. Then
$\tilde{b}_i(u_i, \hat u_i)=0$ and the semi-discrete equation
\eqref{eq:sdLO} reduces to (cf. \cite{guermond2016,kuzmin2020,kuzmin2023})
\begin{equation}\label{eq:LOhomogene}
 \begin{split} 
     m_i\frac{\mathrm{d} u_i}{\mathrm{d} t} &= \sum_{j\in\N_i^*}[ d_{ij} (u_j -u_i) - (\f_j -\f_i)\cdot \con_{ij}]\\ 
     &= \sum_{j\in\N_i^*}[ 2d_{ij} (\overline{u}_{ij} -u_i)],
 \end{split}
\end{equation}
where
\begin{equation}\label{eq:LObarstates}
    \bar u_{ij} = \frac{u_i + u_j}{2} - \frac{(\f_j-\f_i)\cdot \con_{ij}}{2d_{ij}}.
\end{equation}
Let $\n_{ij} = \frac{\con_{ij}}{|\con_{ij}|}$.
As explained, e.g., in \cite{guermond2016}, the low-order \emph{bar state}
$\bar u_{ij}$ can be interpreted as a
space-averaged exact solution $u(\xi,\tau)$ of the projected one-dimensional Riemann problem
\begin{equation}\label{eq:1driemann}
    \frac{\partial u}{\partial t}+\dv(\f(u)\cdot \n_{ij}) = 0,\quad u_0(\xi) =\begin{cases}
        u_i, & \xi <0,\\
        u_j, & \xi >0
    \end{cases}
\end{equation}
at the artificial time $\tau_{ij} =\frac{|\con_{ij}|}{2d_{ij}}$.
Thus, the bar states are realizable if exact solutions to the Riemann problem~\eqref{eq:1driemann} stay in $\mathcal{R}_1$. A proof of the fact that Riemann solutions of the homogeneous $M_1$ model are realizable in one space dimension can be found in~\cite{coulombel2006}. In contrast to the Euler equations, for which the extension of one-dimensional analysis is straightforward~\cite{toro2013}, the multidimensional $M_1$ system requires further investigation because it is not obvious that $\mathcal{R}_1$ is an invariant set for $d\in\{2,3\}$. 
\smallskip

To show that the bar states \eqref{eq:LObarstates} are realizable, we need the following lemma~\cite[Lem. 4.1]{chidyagwai2018}.
\begin{lemma}\label{lem:upm}
    Let $u = (\pz,\po)^\top\in\mathcal{R}_1$, $\pt$ be given by~\eqref{eq:M1closure}, and $\bm\nu\in\R^d$ be an arbitrary vector with $|\bm\nu|\leq 1$. 
    Then, the combination of moments
    $u_\pm = (\pz\pm \po\cdot \bm\nu, \po\pm \pt\cdot\bm\nu)^\top$ is realizable
    for the $M_1$ model. 
\end{lemma}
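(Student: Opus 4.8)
The plan is to exhibit $u_\pm$ as the zeroth and first moments of an explicitly reweighted nonnegative angular distribution and then read off realizability from the moment characterization in the second line of~\eqref{eq:realizableset}. The decisive preliminary observation is that, although $\pt$ enters the statement only through the closure~\eqref{eq:M1closure}--\eqref{eq:chi}, Levermore's realizability result~\cite{levermore1984} recalled above guarantees that $\pz$, $\po$, and $\pt$ are \emph{simultaneously} the zeroth, first, and second moments of one and the same density $\psi\gneq0$ on $\Sp$. Fixing such a $\psi\in L^2(\Sp)$, I would write $\pz=\int_{\Sp}\psi\,\diff\OO$, $\po=\int_{\Sp}\OO\,\psi\,\diff\OO$, and $\pt=\int_{\Sp}\OO\otimes\OO\,\psi\,\diff\OO$.

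Next I would introduce the directional weight $g_\pm(\OO)=1\pm\OO\cdot\bm\nu$. Since $|\OO|=1$ on $\Sp$ and $|\bm\nu|\leq1$, the Cauchy--Schwarz inequality yields $|\OO\cdot\bm\nu|\leq1$, so that $0\leq g_\pm(\OO)\leq2$ for every $\OO\in\Sp$. Consequently $\tilde\psi_\pm\coloneqq g_\pm\psi$ is again nonnegative and remains in $L^2(\Sp)$, $g_\pm$ being bounded. The role of this reweighting is purely algebraic: inserting $g_\pm$ under the moment integrals and expanding the factor $\OO\cdot\bm\nu$ shows that the zeroth moment of $\tilde\psi_\pm$ equals $\pz\pm\po\cdot\bm\nu$ and its first moment equals $\po\pm\pt\cdot\bm\nu$, i.e. $u_\pm=\int_{\Sp}(1,\OO)^\top\tilde\psi_\pm\,\diff\OO$.

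It then remains to verify that $\tilde\psi_\pm$ carries strictly positive mass, and this is precisely where the hypotheses $|\bm\nu|\leq1$ and $u\in\mathcal{R}_1$ are consumed: the total mass of $\tilde\psi_\pm$ is exactly $\pz\pm\po\cdot\bm\nu$, and since $|\po\cdot\bm\nu|\leq|\po|\,|\bm\nu|\leq|\po|<\pz$, this number is positive for both sign choices. Hence $\tilde\psi_\pm\gneq0$, so by the characterization~\eqref{eq:realizableset} the constructed pair $u_\pm$ automatically satisfies $\pz\pm\po\cdot\bm\nu>0$ and the strict velocity bound $|\po\pm\pt\cdot\bm\nu|<\pz\pm\po\cdot\bm\nu$; that is, $u_\pm\in\mathcal{R}_1$, which is the claim. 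Notably, invoking~\eqref{eq:realizableset} delivers the strict realizable-velocity inequality for free, so no separate estimate on $|\po\pm\pt\cdot\bm\nu|$ is needed.

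I expect the only genuine obstacle to lie in the very first step, namely the assertion that a \emph{single} admissible density reproduces all three moments. If $\pt$ were treated as an abstract matrix unrelated to $\pz$ and $\po$, the pair $u_\pm$ could no longer be written as honest moments of a nonnegative density and the reweighting argument would collapse; it is exactly the realizability of the entropy closure that licenses this step. Everything downstream is elementary, with the strictness of the input bound $|\po|<\pz$ transferring directly to the strict positivity of the mass of $\tilde\psi_\pm$, and hence to membership of $u_\pm$ in the interior of $\mathcal{R}_1$.
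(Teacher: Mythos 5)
Your proof is correct and takes essentially the same route as the paper: both arguments exhibit $u_\pm$ as the zeroth and first moments of the reweighted density $\psi_\pm(\OO) = (1\pm\bm\nu\cdot\OO)\,\psi(\OO)$, which is nonnegative because $|\bm\nu\cdot\OO|\leq|\bm\nu|\,|\OO|\leq 1$. You merely make explicit two points the paper leaves implicit --- that Levermore's result furnishes a \emph{single} $\psi\gneq 0$ realizing $\pz$, $\po$, and the closure $\pt$ simultaneously, and that the total mass $\pz\pm\po\cdot\bm\nu$ is strictly positive (from $|\po\cdot\bm\nu|\leq|\po|<\pz$), so that $\psi_\pm\gneq 0$ and $u_\pm$ lands in the interior of $\mathcal{R}_1$ --- which tightens rather than changes the argument.
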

\begin{proof}
    Assume that $\pz, \po$, and $\pt$ are moments of a nonnegative function $\psi$.
    Then 
    \begin{equation*}
        u_\pm = \left(\begin{array}{cc}
             \pz\pm \po\cdot \bm\nu \\
             \po\pm \pt\cdot\bm\nu
        \end{array}\right)
    \end{equation*}
    are the zeroth and first moments of 
    \begin{equation*}
        \psi_\pm(\OO) = (1\pm \bm\nu\cdot \OO)\psi(\OO),\quad \OO\in\Sp.
    \end{equation*}
    Since $|\bm\nu\cdot \OO|\leq |\bm\nu||\OO|\leq 1$, the so-defined
    $\psi_\pm(\OO)$ is nonnegative.
    %Moreover, if $|\bm\nu|< 1$ then $\psi_\pm$ is strictly positive.
\end{proof}
Let us now show the realizability of the bar states.
\begin{theorem}\label{thm:IDPbarstates}
    The low-order bar states~\eqref{eq:LObarstates} are realizable if $u_i, u_j\in\mathcal{R}_1$ and $d_{ij}\geq |\con_{ij}|$.
\end{theorem}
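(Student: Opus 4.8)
The plan is to exhibit the bar state $\bar u_{ij}$ as the arithmetic mean of two auxiliary states, each of which is realizable by Lemma~\ref{lem:upm}, and then to conclude using convexity of $\mathcal{R}_1$. First I would set $\n_{ij}=\con_{ij}/|\con_{ij}|$ and $\theta = |\con_{ij}|/d_{ij}$, so that the hypothesis $d_{ij}\geq|\con_{ij}|$ forces $0\leq\theta\leq 1$. Factoring $\con_{ij}=|\con_{ij}|\n_{ij}$ out of~\eqref{eq:LObarstates} then gives
\[
  \bar u_{ij} = \frac{u_i+u_j}{2} - \frac{\theta}{2}\,(\f_j-\f_i)\cdot\n_{ij}.
\]

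Next I would exploit the block structure $\f(u)\cdot\n = (\po\cdot\n,\ \pt\cdot\n)^\top$ of the flux, writing $u_i=(\pz_i,\po_i)^\top$ and $\f_i=(\po_i,\pt_i)^\top$ with $\pt_i$ the closed second moment at node $i$. Expanding the projected differences and regrouping the terms indexed by $i$ separately from those indexed by $j$ yields the decomposition
\[
  \bar u_{ij} = \frac12\begin{pmatrix}\pz_i + \theta\,\po_i\cdot\n_{ij}\\ \po_i + \theta\,\pt_i\cdot\n_{ij}\end{pmatrix} + \frac12\begin{pmatrix}\pz_j - \theta\,\po_j\cdot\n_{ij}\\ \po_j - \theta\,\pt_j\cdot\n_{ij}\end{pmatrix}.
\]
The key observation is that, with the single choice $\bm\nu = \theta\,\n_{ij}$, these two column vectors are precisely the states $u_{i,+}$ and $u_{j,-}$ furnished by Lemma~\ref{lem:upm}.

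Since $|\n_{ij}|=1$, we have $|\bm\nu|=\theta\leq 1$, so Lemma~\ref{lem:upm} applies to both $u_i$ and $u_j$ and gives $u_{i,+},\,u_{j,-}\in\mathcal{R}_1$. As $\bar u_{ij}=\tfrac12 u_{i,+}+\tfrac12 u_{j,-}$ is their mean and $\mathcal{R}_1$ is a convex cone, the bar state lies in $\mathcal{R}_1$ and is therefore realizable. I expect the only delicate point to be the bookkeeping in the middle step: one must pair the scalar component $\po\cdot\n_{ij}$ with the vector component $\pt\cdot\n_{ij}$ so that the signs assemble into the compatible pair $(u_{i,+},u_{j,-})$ rather than a mismatched one. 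The role of the hypothesis $d_{ij}\geq|\con_{ij}|$ is exactly to secure $|\bm\nu|\leq 1$, which is the condition under which Lemma~\ref{lem:upm} holds; without it the weight $1\pm\bm\nu\cdot\OO$ in the perturbed distribution could become negative and realizability would be lost.
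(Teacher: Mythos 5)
Your proposal is correct and follows essentially the same route as the paper: the paper likewise splits $\bar u_{ij}=\tfrac12\overline{u}_i+\tfrac12\overline{u}_j$ with $\overline{u}_i=u_i+\f_i\cdot\tfrac{\con_{ij}}{d_{ij}}$ and $\overline{u}_j=u_j-\f_j\cdot\tfrac{\con_{ij}}{d_{ij}}$, applies Lemma~\ref{lem:upm} with $\bm\nu=\con_{ij}/d_{ij}$ (your $\theta\,\n_{ij}$), and concludes by convexity of $\mathcal{R}_1$. Your explicit blockwise bookkeeping of the signs is exactly the ``flux of the $n$-th moment is the $(n+1)$-th moment'' observation the paper invokes implicitly.
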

\begin{proof}
  Introducing the auxiliary states
   \begin{equation}\label{eq:uibar}
        \overline{u}_i = u_i + \f_i\cdot \frac{\con_{ij}}{d_{ij}}, \quad 
        \overline{u}_j = u_j - \f_j\cdot \frac{\con_{ij}}{d_{ij}},
   \end{equation}
   we notice that
    \begin{equation}\label{eq:barstatesplit}
        \overline u_{ij} = \frac{1}{2}\overline{u}_i + \frac{1}{2}\overline{u}_j.
    \end{equation}
    Recall that the flux of the $n$-th moment is the $(n+1)$-th moment. 
    Therefore, the realizability of $\overline{u}_i$ and $\overline{u}_j$ follows directly from Lemma~\ref{lem:upm} under the assumption that $|\bm\nu| = \frac{|\con_{ij}|}{d_{ij}}\leq 1$ and $u_i,u_j\in\mathcal{R}_1$.
    Owing to the convexity of $\mathcal{R}_1$, we obtain the desired result.
\end{proof}
\begin{remark}
    If the nodal states $u_i$ and $u_j$ are realized by $\psi_i\gneq0$ and $\psi_j\gneq0$, respectively, then the fact that the low-order bar states~\eqref{eq:LObarstates} are moments of
    \begin{equation*}
            \overline{\psi}_{ij} = \frac{\psi_i +\psi_j}{2} -\frac{(\psi_j -\psi_i)\,\OO\cdot\con_{ij}}{2d_{ij}} \gneq 0
        \end{equation*}
can be easily verified using the splitting~\eqref{eq:barstatesplit} and Lemma~\ref{lem:upm}.
\end{remark}

\begin{remark}
Berthon et al.~\cite{berthon2007} split the intermediate state of the HLL Riemann solver in a similar way and determine the diffusion coefficient by direct calculation to enforce realizability.
\end{remark}

If the homogeneous semi-discrete problem \eqref{eq:LOhomogene} is discretized in time
using an explicit SSP-RK method, then it is easy to show that,
 under
a suitable time step restriction,
each forward
Euler stage is IDP w.r.t. the set of realizable moments $\mathcal{R}_1$. Indeed, the updated nodal state
%We show this by rewriting the forward Euler step
\begin{equation}\label{eq:homoupdate}
\begin{split}
    u_i^{\mathrm{SSP,H}} &= u_i +\frac{\Delta t}{m_i}  \sum_{j \in\N_i^*} 2d_{ij} (\bar u_{ij} -u_i) \\
    &= \left(1- \frac{2\Delta t}{m_i}\sum_{j\in\N^*_i}d_{ij}\right)u_i + \frac{2\Delta t}{m_i}\sum_{j\in\N^*_i}d_{ij}\bar u_{ij}
\end{split}
\end{equation}
is a convex combination of states belonging to $\mathcal{R}_1$, provided
that  $u_j\in\mathcal{R}_1\ \forall j\in\N_i$ and the time step $\Delta t$
satisfies the CFL-like condition (cf. \cite{guermond2016})
\begin{equation}\label{eq:CFL}
	\frac{2\Delta t}{m_i}\sum_{j\in\N^*_i}d_{ij} \leq 1.
\end{equation}
Since $\mathcal{R}_1$ is convex, the explicit update \eqref{eq:homoupdate}
yields a realizable state $u_i^{\mathrm{SSP},H}\in \mathcal{R}_1$.

\begin{remark}
  The lumped boundary term~\eqref{eq:lumpedbdr} can be written in a bar state form similar to~\eqref{eq:LOhomogene}. The realizability of nodal states $u_i$ associated with
  boundary points $\x_i\in\Gamma$
  can then be shown using the same convexity argument. For details, we refer the interested reader to~\cite{hajduk2022diss, kuzmin2023, moujaes2025}.
\end{remark}

\subsection{Particle source discretization}

The zeroth and first moments of a particle source $Q\geq 0$ in the Boltzmann transport equation~\eqref{eq:Boltzmann} constitute the source term $q= (q^{(0)},\mathbf{q}^{(1)})^\top$ of the $M_1$ system~\eqref{eq:M1}.
If we assume that $q\in\overline{\mathcal{R}}_1$, i.e.,
\begin{equation*}
    q^{(0)}\geq 0\quad  \text{and}\quad \left| \mathbf{q}^{(1)}\right|\leq q^{(0)},
\end{equation*}
then the components of $s_i = (s_i^{(0)},\s_i^{(1)})^\top$ satisfy
\begin{equation*}
\begin{split}
    s_i^{(0)} &=  \int_\D \varphi_iq^{(0)} \,\diff\x \geq 0,\\
    \left|\s_i^{(1)}\right| &\leq \int_\D \varphi_i\left|\mathbf{q}^{(1)}\right|\,\diff\x \leq s_i^{(0)}.
\end{split}
\end{equation*}
Adding the contribution of $s_i$ to the forward Euler stage
\eqref{eq:homoupdate}, we obtain
\begin{equation}\label{eq:LOparticlesource}
\begin{split}
    u_i^{\mathrm{SSP,S}} &= u_i + \frac{\Delta t}{m_i}  \left(\sum_{j \in\N_i^*} 2d_{ij} (\bar u_{ij} -u_i) + s_i\right) \\
    &= u_i^{\mathrm{SSP,H}} +\frac{\Delta t}{m_i} s_i,
\end{split}
\end{equation}
where $u_i^{\mathrm{SSP,H}}$ is the solution of the homogeneous problem~\eqref{eq:homoupdate}, which we have shown to be in $\mathcal{R}_1$ if $u_j\in\mathcal{R}_1\ \forall j\in\N_i$.
Obviously, if $\psi_i^{(0),\mathrm{SSP,H}}>0$ and $\left|\bm\psi_i^{(1),\mathrm{SSP,H}}\right| < \psi_i^{(0),\mathrm{SSP,H}}$, then
\begin{equation*}
   \psi_i^{(0),\mathrm{SSP,S}} = \psi_i^{(0),\mathrm{SSP,H}} +\frac{\Delta t}{m_i} s_i^{(0)} >0
\end{equation*}
and 
\begin{align*}
     \left|\bm\psi_i^{(1),\mathrm{SSP,S}} \right| &\leq  \left|\bm\psi_i^{(1),\mathrm{SSP,S}}\right| + \left|\frac{\Delta t}{m_i} \s_i^{(1)}\right|\\
     &<   \psi_i^{(0),\mathrm{SSP,H}} + \frac{\Delta t}{m_i} s_i^{(0)}  \\
     & = \psi_i^{(0),\mathrm{SSP,S}}.
\end{align*}
Therefore, $u_i^{\mathrm{SSP,S}}\in \mathcal{R}_1$ under the CFL-like condition~\eqref{eq:CFL}.

\begin{remark}
    The above analysis shows that the result of the forward Euler stage~\eqref{eq:LOparticlesource} is guaranteed to be admissible even for nontrivial sources $q\in\partial\mathcal{R}_1$ that correspond to Dirac delta distributions and represent perfectly collimated particle beams.
\end{remark}

\subsection{Reactive terms}
Let us now turn our attention to the full inhomogeneous system with $\sigma_s\geq 0$, $\sigma_a \geq 0$ and $q\in\overline{\mathcal{R}_1}$. We discretize~\eqref{eq:sdLO} in time using an SSP-RK scheme in which the reactive term $m_i^{\sigma}u_i$ is treated implicitly, while other terms are treated explicitly. That is, each intermediate stage is of the form
\begin{equation*}
    \left(m_i +\Delta t m_i^{\sigma}\right) u_i^{\mathrm{SSP,R}} = m_i u_i + \Delta t\left(\sum_{j\in\N_i^*}2d_{ij} (\overline{u}_{ij} -u_i) + s_i\right).
\end{equation*}
Since $m_i^\sigma = \mathrm{diag}(m_i^{\sigma_a}, m_i^{\sigma_t},\ldots, m_i^{\sigma_t})$ is a diagonal matrix with nonnegative entries, we have
\begin{equation}\label{eq:fullLO}
\begin{split}
    u_i^{\mathrm{SSP,R}} &= \frac{m_i}{m_i + \Delta t m_i^{\tilde\sigma}}\left[ u_i + \frac{\Delta t}{m_i}\left(\sum_{j\in\N_i^*}2d_{ij} (\overline{u}_{ij} -u_i) + s_i\right)\right]\\
    &= \frac{m_i}{m_i + \Delta t m_i^{\tilde\sigma}} u_i^{\mathrm{SSP}, \mathrm{S}},
\end{split}
\end{equation}
where the value of $\tilde\sigma\in\{\sigma_a,\sigma_t\}$ depends on the component and $u_i^{\mathrm{SSP}, S}\in\mathcal{R}_1$ is given by~\eqref{eq:LOparticlesource}. 
We note that $m_i^{\sigma_t} \geq m_i^{\sigma_a}$ since $\sigma_t = \sigma_a +\sigma_s$.

\begin{lemma}\label{lem:scaledstate}
    Let $u_i = (\pz_i,\po_i)^\top \in\mathcal{R}_1$ be a physically admissible state. 
    Then the scaled state 
    \begin{equation}\label{eq:scaledstate}
        \tilde u_i = \frac{m_i}{m_i + \Delta t m_i^{\tilde\sigma}} u_i = \left(\begin{array}{c}
             \frac{m_i}{m_i + \Delta t m_i^{\sigma_a}} \pz_i\\
             \frac{m_i}{m_i + \Delta t m_i^{\sigma_t}} \po_i
        \end{array}\right)\, \in\mathcal R_1
    \end{equation}
    is admissible for any $\Delta t >0$.
\end{lemma}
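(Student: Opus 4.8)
The plan is to check directly the two defining conditions of the realizable set $\mathcal{R}_1$ from~\eqref{eq:realizableset} for the two components of $\tilde u_i$. Writing the scaled zeroth and first moments as $\frac{m_i}{m_i + \Delta t m_i^{\sigma_a}}\pz_i$ and $\frac{m_i}{m_i + \Delta t m_i^{\sigma_t}}\po_i$, I first note that positivity of the scaled zeroth moment is immediate: since $m_i>0$, $\Delta t>0$, and $m_i^{\sigma_a}\geq 0$, the scalar factor $\frac{m_i}{m_i + \Delta t m_i^{\sigma_a}}$ is strictly positive, so $\pz_i>0$ forces the scaled zeroth moment to remain positive.

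For the realizable velocity condition I would exploit the inequality $m_i^{\sigma_t}\geq m_i^{\sigma_a}$ already recorded in the surrounding text, which holds because $\sigma_t=\sigma_a+\sigma_s\geq\sigma_a$ and the Lagrange basis functions are nonnegative. This yields $m_i+\Delta t\,m_i^{\sigma_t}\geq m_i+\Delta t\,m_i^{\sigma_a}>0$, so the factor multiplying the first moment is no larger than the one multiplying the zeroth moment. Combining this monotonicity with the assumption $|\po_i|<\pz_i$ then gives
\begin{equation*}
    \left|\frac{m_i}{m_i + \Delta t m_i^{\sigma_t}}\po_i\right|
    = \frac{m_i}{m_i + \Delta t m_i^{\sigma_t}}|\po_i|
    \leq \frac{m_i}{m_i + \Delta t m_i^{\sigma_a}}|\po_i|
    < \frac{m_i}{m_i + \Delta t m_i^{\sigma_a}}\pz_i,
\end{equation*}
which is precisely the statement that the magnitude of the scaled first moment stays strictly below the scaled zeroth moment, i.e.\ $\tilde u_i\in\mathcal{R}_1$.

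The step carrying the weight of the argument is the comparison of the two denominators via $m_i^{\sigma_t}\geq m_i^{\sigma_a}$; everything else is elementary manipulation of positive scalar factors. Conceptually, this reflects that the implicit reactive update damps the directional first moment at least as strongly as it damps the particle-number zeroth moment, so the strict inequality $|\po_i|<\pz_i$ cannot be violated and is in fact reinforced whenever $\sigma_s>0$. I do not anticipate any genuine obstacle: in the limiting case $\sigma_s=0$ both components are scaled by the identical factor and the claim degenerates to the fact that $\mathcal{R}_1$ is a cone.
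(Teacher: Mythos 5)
Your proposal is correct and follows essentially the same route as the paper's proof: positivity of the scaled zeroth moment is immediate from the positive scalar factor, and the realizable velocity condition rests on the key inequality $m_i^{\sigma_t}\geq m_i^{\sigma_a}$ (the paper phrases this as the ratio bound $\frac{m_i+\Delta t\, m_i^{\sigma_a}}{m_i+\Delta t\, m_i^{\sigma_t}}\frac{|\po_i|}{\pz_i}\leq\frac{|\po_i|}{\pz_i}<1$, which is an algebraically equivalent rearrangement of your comparison of the two scaling factors). Your closing remark that the first moment is damped at least as strongly as the zeroth moment is exactly the mechanism the paper exploits, so there is nothing to add.
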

\begin{proof}
    Assume that $u_i = (\pz_i,\po_i)^\top \in\mathcal{R}_1$, i.e., $\pz >0$ and $|\po_i|< \pz$.
    To show the admissibility of the state $\tilde u_i = (\tilde\psi_i^{(0)}, \tilde{\bm\psi}_i^{(1)})^\top$ given by~\eqref{eq:scaledstate}, we first notice that
    \begin{equation*}
        \tilde\psi_i^{(0)} = \frac{m_i}{m_i + \Delta t m_i^{\sigma_a}} \pz_i > 0,
    \end{equation*}
    since $m_i>0$ and $\Delta t m_i^{\sigma_a}\geq 0$.
    Using the fact that $m_i^{\sigma_t} \geq m_i^{\sigma_a}$, we obtain the estimate
    \begin{equation*}
        \frac{\left|\tilde{\bm\psi}_i^{(1)}\right|}{ \tilde\psi_i^{(0)}}% = \frac{\frac{m_i}{m_i + \Delta t m_i^{\sigma_t}}}{\frac{m_i}{m_i +\Delta t m_i^{\sigma_a}}} \frac{|\po_i|}{\pz_i}
        = \frac{m_i +\Delta t m_i^{\sigma_a}}{m_i + \Delta t m_i^{\sigma_t}}\frac{|\po_i|}{\pz_i}\leq \frac{|\po_i|}{\pz_i}<1.
    \end{equation*}
    Therefore, $\tilde u_i = (\tilde\psi_i^{(0)}, \tilde{\bm\psi}_i^{(1)})^\top\in\mathcal{R}_1$, as claimed in the lemma.
\end{proof}

The IDP property of the fully discrete implicit-explicit low-order scheme~\eqref{eq:fullLO} follows directly from Lemma~\ref{lem:scaledstate} and the previously established fact that $u_i^{\mathrm{SSP},S}\in\mathcal{R}_1$ under the CFL-like 
condition~\eqref{eq:CFL}.

\begin{remark}
    In order to use a generic implementation of SSP-RK methods in an existing code, such as the open-source \texttt{C++} finite element library MFEM~\cite{anderson2021,andrej2024,mfem}, we can rewrite the implicit-explicit Euler stages of our fully discrete low-order method as
    \begin{align*}
        u_i^{\mathrm{SSP}} &= \frac{m_i}{m_i + \Delta t m_i^{\sigma}} \left( u_i + \frac{\Delta t }{m_i}\left(\sum_{j\in\N_i^*}[ d_{ij} (u_j -u_i) - (\f_j -\f_i)\cdot \con_{ij}] + s_i\right)\right)\\
        %&= u_i -  u_i + \frac{m_i}{m_i + \Delta t m_i^{\sigma}} \left( u_i + \frac{\Delta t }{m_i}\left(\sum_{j\in\N_i^*}[ d_{ij} (u_j -u_i) - (\f_j -\f_i)\cdot \con_{ij}]  + s_i\right)\right)\\
       % & = u_i +  \left(\frac{m_i}{m_i + \Delta t m_i^{\sigma}} -1 \right) u_i + \frac{\Delta t}{m_i + \Delta t m_i^{\sigma}} \left(\sum_{j\in\N_i^*}[ d_{ij} (u_j -u_i) - (\f_j -\f_i)\cdot \con_{ij}]  + s_i\right)\\
        & = u_i + \Delta t \left(\frac{1}{\Delta t} \left(\frac{m_i}{m_i + \Delta t m_i^{\sigma}} -1 \right) u_i + \frac{1}{m_i + \Delta t m_i^{\sigma}} \left(\sum_{j\in\N_i^*}[ d_{ij} (u_j -u_i) - (\f_j -\f_i)\cdot \con_{ij}]  +     s_i\right)\right).
    \end{align*}
   This is an update of the form $u^{n+1} = u^n +\Delta t g(u^n)$, which 
   reduces to the forward Euler stage \eqref{eq:LOparticlesource}
   if $m_i^{\sigma}=0$ because $\sigma_a = \sigma_s = 0$.
\end{remark}

\begin{remark}
    The theoretical results of this section can be extended to higher-order moment models derived from the LBE~\eqref{eq:Boltzmann}.
    The result of Lemma~\ref{lem:upm}, and thus of Theorem~\ref{thm:IDPbarstates}, can be adapted to any $M_N$, $N\geq 1$ model as long as it is equipped with a physical closure. 
    We conclude that the approach we used to derive the low-order IDP scheme for the $M_1$ model can  be applied to higher-order $M_N$ moment models similarly.
\end{remark}

%%%
%%% MCL
%%%
\section{Monolithic convex limiting}
\label{sec:MCL}
The difference between the residuals of the semi-discrete CG formulation~\eqref{eq:stdCG} and of its low-order counterpart~\eqref{eq:sdLO} can be decomposed into an array of antidiffusive fluxes
\begin{equation}\label{eq:CGADF}
    f_{ij} = m_{ij} (\dot{u}_i - \dot u_j) + (d_{ij} + m_{ij}^{\sigma}) (u_i - u_j).
\end{equation}
The addition of $m_{ij} (\dot{u}_i - \dot u_j)$ and $m_{ij}^{\sigma} (u_i - u_j)$
on the right-hand side of \eqref{eq:sdLO} would correct the error due to mass lumping for the time derivative and reactive terms, respectively. The contribution of $d_{ij}(u_i- u_j)$ would offset the diffusive fluxes that appear on the right-hand side of~\eqref{eq:sdLO}.

To avoid solving the linear system~\eqref{eq:stdCG} and stabilize the
CG discretization as in \cite{kuzmin2020,kuzmin2023,lohmann2019},
we approximate the consistent nodal time derivative $\dot u_i$ by
\begin{equation*}
    \dot u_i^L = \frac{1}{m_i}\left(\sum_{j \in\N_i^*} \left[d_{ij} (u_j -u_i) - (\f_j-\f_i)\cdot \con_{ij}\right] - m_i^{\sigma}u_i + s_i\right)
\end{equation*}
and use the modified \emph{raw antidiffusive fluxes}  
\begin{equation}\label{eq:rawADF}
    f_{ij}^s = m_{ij} (\dot{u}^L_i - \dot u_j^L) + (d_{ij} + m_{ij}^{\sigma}) (u_i - u_j)
\end{equation}
to define the stabilized \emph{target scheme} 
\begin{equation}\label{target}
m_i\frac{\diff u_i}{\diff t} = \sum_{j \in\N_i^*} \left[d_{ij} (u_j -u_i) - (\f_j-\f_i)\cdot \con_{ij} + f_{ij}^s\right] - m_i^{\sigma}u_i + s_i.
\end{equation}
Owing to the skew symmetry property $f_{ji}^s = -f_{ij}^s$, the total mass
remains unchanged but low-order stabilization built into~\eqref{eq:sdLO} 
is replaced by high-order
background dissipation.
\medskip

Similarly to \eqref{eq:LOhomogene},
the spatial semi-discretization \eqref{target} can be written
in the bar state form
 \begin{equation}\label{target2}
   \begin{split}
  m_i\frac{\diff u_i}{\diff t}
  &=\sum_{j \in\N_i^*} [2d_{ij} (\bar u_{ij} -u_i)+f_{ij}^s] - m_i^{\sigma}u_i + s_i,\\
  &=\sum_{j \in\N_i^*} 2d_{ij} (\bar u_{ij}^H -u_i) - m_i^{\sigma}u_i + s_i,
  \end{split}
\end{equation}
where $$\bar u_{ij}^H =
\bar u_{ij} + \frac{f_{ij}^s}{2d_{ij}}.$$
The so-defined high-order bar states $\bar u_{ij}^H$
do not necessarily
belong to the admissible set $\mathcal{R}_1$.
Using the monolithic convex limiting framework proposed in~\cite{kuzmin2020}, we replace $\bar u_{ij}^H$ by
\begin{equation}\label{eq:limitedbarstates}
    \bar u_{ij}^* = \bar u_{ij} +\frac{f_{ij}^*}{2d_{ij}}.
\end{equation}
The construction of the limited flux $f_{ij}^*\approx f_{ij}^s$ is guided by three objectives:
\begin{enumerate}
    \item Suppress spurious oscillations and numerical instabilities.
    \item Ensure that the limited bar states~\eqref{eq:limitedbarstates} belong to $\mathcal{R}_1$.
    \item Preserve the skew symmetry property $f_{ij}^* = -f_{ij}^*$.
\end{enumerate}
As shown by our analysis in the previous section, the second requirement implies that each stage of the fully discrete flux-corrected scheme produces a realizable state
\begin{equation}\label{eq:fullydiscreteMCL}
    u_i^{\mathrm{SSP}} = \frac{m_i}{m_i + \Delta t m_i^{\tilde\sigma}}\left[ u_i + \frac{\Delta t}{m_i}\left(\sum_{j\in\N_i^*}2d_{ij} (\overline{u}_{ij}^* -u_i) + s_i\right)\right]\,\in\mathcal{R}_1
\end{equation}
under the CFL-like condition~\eqref{eq:CFL}. Numerical stability can be enhanced by imposing local bounds on individual components of $\bar u_{ij}^*$ or scalar functions thereof (cf. \cite{dobrev2018, hajduk2019, hajduk2021, kuzmin2020, kuzmin2023, moujaes2025}).

The investigations performed in~\cite{chidyagwai2018, dzanic2025} indicate that componentwise limiting is a good approach to enforcing numerical admissibility conditions for the $M_1$ model.
Let $\phi_i\in\{\pz_i, \psi_{i,1}^{(1)},\ldots,  \psi_{i,d}^{(1)}\}$ be a component of $u_i\in \mathcal{R}_1$ with corresponding low-order bar-state component $\bar\phi_{ij}$ and raw antidiffusive flux $f_{ij}^{\phi}$, $j\in\N_i^*$.
We formulate the inequality constraints
\begin{equation}\label{eq:localbp}
  \begin{split}
    \phi_i^{\min}\leq\bar\phi_{ij}^* &= \bar\phi_{ij} +\frac{f_{ij}^{\phi,*}}{2d_{ij}} \leq \phi_i^{\max},\\
      \phi_j^{\min}\leq\bar\phi_{ji}^* &= \bar\phi_{ji} -\frac{f_{ij}^{\phi,*}}{2d_{ij}} \leq \phi_j^{\max}
    \end{split}
\end{equation}
for $f_{ij}^{\phi,*}=-f_{ji}^{\phi,*}$
using the local bounds
\begin{equation}\label{eq:localbounds}
    \phi_i^{\max} = \max\left\{ \max_{j\in\N_i} \phi_j, \max_{j\in\N_i^*} \bar\phi_{ij} \right\},\quad
    \phi_i^{\min} = \min\left\{ \min_{j\in\N_i} \phi_j, \min_{j\in\N_i^*} \bar\phi_{ij} \right\}.
\end{equation}
The limiting conditions defined by \eqref{eq:localbp}
and \eqref{eq:localbounds} are feasible
because they hold for $f_{ij}^{\phi,*} = 0$.

Rearranging~\eqref{eq:localbp}, we find that the limited counterpart
$f_{ij}^{\phi,*}$ of $f_{ij}^{\phi}$ should satisfy
\begin{equation}\label{eq:fluxbounds}
  \begin{split}
    2d_{ij}\left(\phi_i^{\min} -\bar\phi_{ij}\right)\leq f_{ij}^{\phi,*}&\leq 2d_{ij}\left( \phi_i^{\max} -\bar\phi_{ij}\right),\\
    2d_{ij}\left(\phi_j^{\min}-\bar\phi_{ji}
    \right)\leq -f_{ij}^{\phi,*}&\leq 2d_{ij}
    \left(\phi_j^{\max}-\bar\phi_{ji} \right).
    \end{split}
\end{equation}
It is easy to verify that the limited antidiffusive fluxes defined
by~\cite{kuzmin2020, kuzmin2023}
\begin{equation}\label{eq:limitedfluxes}
    f_{ij}^{\phi,*}=\begin{cases}
        \min\left\{f_{ij}^\phi, 2d_{ij}\min \left\{\phi_i^{\max} -\bar\phi_{ij}, \bar\phi_{ji} -\phi_j^{\min}\right\}\right\}&\text{if }f_{ij}^\phi>0,\\
        \max\left\{f_{ij}^\phi, 2d_{ij}\max \left\{\phi_i^{\min} -\bar\phi_{ij}, \bar\phi_{ji} -\phi_j^{\max}\right\}\right\}&\text{otherwise}\\
    \end{cases}
\end{equation}
are skew-symmetric and
satisfy the local maximum principles
\eqref{eq:localbp} for individual components
of $\bar u_{ij}^*$.

In addition to strong numerical stability, the use of
\eqref{eq:limitedfluxes} ensures that $\bar\psi^{(0),*}_{ij}\geq\psi_i^{(0),\min}>0$.
However, the flux-corrected bar state may still violate
the realizable velocity constraint $|\po|<\pz$.
We enforce this constraint in a second limiting step
by adapting the IDP fix designed to ensure positivity
preservation for the pressure (internal energy)
of the compressible Euler equations~\cite{kuzmin2020, kuzmin2023}.

Let $f_{ij}^* = (f_{ij}^{*(0)}, \f_{ij}^{*(1)})^\top$ be a
limited antidiffusive flux whose individual components are
given by~\eqref{eq:limitedfluxes}. We define the
final, physically admissible antidiffusive flux
\begin{equation*}
    f_{ij}^{\mathrm{IDP}} = \alpha_{ij}^{\mathrm{IDP}} f_{ij}^*
\end{equation*}
using a scalar correction factor 
$\alpha_{ij}^{\mathrm{IDP}}\in[0,1]$ such that
\begin{equation}\label{eq:HOIDPbarstats}
    \overline{u}_{ij}^{\mathrm{IDP}} = \overline{u}_{ij} +\frac{\alpha_{ij}^{\mathrm{IDP}} f_{ij}^*}{2d_{ij}} \in\mathcal{R}_1.
\end{equation}
The positivity of the particle density is guaranteed for any
$\alpha_{ij}^{\mathrm{IDP}}\in[0,1]$ because it was enforced
in the componentwise limiting step. The
realizable velocity constraint can be formulated as
\begin{equation*}
    \left|\bar{\bm\psi}^{(1)}_{ij} + \frac{\alpha_{ij}^{\mathrm{IDP}} \f_{ij}^{*(1)}}{2d_{ij}}\right|^2 
    < \left(\bar\psi^{(0)}_{ij} + \frac{\alpha_{ij}^{\mathrm{IDP}} f_{ij}^{*(0)}}{2d_{ij}}\right)^2.
\end{equation*}
This inequality is equivalent to 
\begin{equation}\label{eq:pij<qij}
    P_{ij}(\alpha_{ij}^{\mathrm{IDP}}) < Q_{ij},% \coloneqq (2d_{ij})^2\left(
    %\left(\bar\psi^{(0),*}_{ij}\right)^2 - \left|\bar{\bm\psi}^{(1),*}_{ij}\right|^2\right) > 0,
\end{equation}
where 
\begin{equation*}
    P_{ij}(\alpha) = \left(\left|\f_{ij}^{*(1)}\right|^2 - \left(f_{ij}^{*(0)}\right)^2 \right)\alpha^2 
    + 4 d_{ij} \left(\bar{\bm\psi}^{(1),*}_{ij}\cdot \f_{ij}^{*(1)} -  \bar\psi^{(0),*}_{ij}f_{ij}^{*(0)}\right)\alpha,
\end{equation*}
\begin{equation*}
    Q_{ij} = (2d_{ij})^2\left(
    \left(\bar\psi^{(0)}_{ij}\right)^2 - \left|\bar{\bm\psi}^{(1)}_{ij}\right|^2\right) > 0.
\end{equation*}
The positivity of $Q_{ij}$ follows from the IDP property of the low-order bar states. It follows that \eqref{eq:pij<qij} holds for the trivial choice $\alpha_{ij}^{\mathrm{IDP}} = 0$. Thus, the additional constraint \eqref{eq:pij<qij} is feasible.

Using the
fact that $\alpha^2 \leq \alpha$ for all $\alpha\in[0,1]$, we find that
$P_{ij} (\alpha)\leq \alpha R_{ij}$ for all $\alpha\in[0,1]$ and
\begin{equation*}
    R_{ij} = \max\left\{  0, \left|\f_{ij}^{*(1)}\right|^2 - \left(f_{ij}^{*(0)}\right)^2  \right\} + 4 d_{ij} \left(\bar{\bm\psi}^{(1),*}_{ij}\cdot \f_{ij}^{*(1)} -  \bar\psi^{(0),*}_{ij}f_{ij}^{*(0)}\right).
\end{equation*}
Let $\tilde{Q}_{ij} = (1-\varepsilon) Q_{ij}>0$ with $\varepsilon = 10^{-15}$.
Then the application of
\begin{equation*}
    \alpha_{ij}^{\mathrm{IDP}} = \begin{cases}
        \min\left\{ \frac{\tilde Q_{ij}}{R_{ij}} , \frac{\tilde Q_{ji}}{R_{ji}} \right\} &\text{if } R_{ij} > \tilde Q_{ij}, R_{ji} >\tilde Q_{ji},\\
        \frac{\tilde Q_{ij}}{R_{ij}} &\text{if }R_{ij} > \tilde Q_{ij}, R_{ji} \leq  \tilde Q_{ji},\\
        \frac{\tilde Q_{ji}}{R_{ji}} &\text{if }R_{ij} \leq  \tilde Q_{ij}, R_{ji} >  \tilde Q_{ji},\\
        1 &\text{otherwise}
    \end{cases}
\end{equation*}
to all components of the prelimited antidiffusive flux $f_{ij}^* = (f_{ij}^{*(0)}, \f_{ij}^{*(1)})^\top$ ensures that
\begin{equation*}
    P_{ij}(\alpha_{ij}^{\mathrm{IDP}}) \leq \alpha_{ij}^{\mathrm{IDP}} R_{ij} \leq \tilde Q_{ij} < Q_{ij} \quad \text{and} \quad P_{ji}(\alpha_{ij}^{\mathrm{IDP}}) \leq \alpha_{ij}^{\mathrm{IDP}} R_{ji} \leq \tilde Q_{ji}< Q_{ji}. 
\end{equation*}
Therefore, $\overline{u}_{ij}^{\mathrm{IDP}}\in\mathcal{R}_1$ whenever $\overline{u}_{ij}\in\mathcal{R}_1$.
Substituting $\overline{u}_{ij}^{\mathrm{IDP}}$ for  $\overline{u}_{ij}$
in~\eqref{eq:fullydiscreteMCL}, we obtain a numerically stable and physically
admissible high-order IDP discretization of the $M_1$ model.

%%%
%%% Numerical examples
%%%
\section{Numerical examples}
\label{sec:examples}

To evaluate the proposed limiting strategy and compare it with approaches employed in the literature, we apply our realizability-preserving MCL scheme to representative test problems. For temporal discretization, we use Heun's scheme, a second-order explicit SSP-RK method. In the inhomogeneous case, lumped reactive terms are treated implicitly, as in the low-order update~\eqref{eq:fullLO}. Steady-state computations are performed using pseudo-time stepping with a single implicit-explicit Euler stage. In view of condition \eqref{eq:CFL}, the time step $\Delta t$ is determined using the formula~\cite{guermond2016, kuzmin2020, kuzmin2023} 
\begin{equation}\label{eq:timestepsize}
 \Delta t
  = \frac{\mathrm{CFL}}{\max_{i\in\{1,\ldots,N_h\}}\frac{2}{m_i}\sum_{j\in\N^*_i}d_{ij}},
\end{equation}
where $\mathrm{CFL} \leq 1$ is a given threshold.
This choice of $\Delta t$ guarantees realizability, as shown by our analysis in Sections~\ref{sec:LO} and~\ref{sec:MCL}.
Note that the time stepping based on~\eqref{eq:timestepsize} is independent of the solution and its evolution. 
Thus, the time step needs to be evaluated just once in a preprocessing step.

The implementation of MCL that we test in our numerical experiments is based on the open-source \texttt{C++} finite element library MFEM~\cite{anderson2021,andrej2024,mfem}. The results are visualized in Paraview~\cite{ayachit2015}.

\subsection{Line source}
To test the shock capturing capabilities of our numerical scheme, we consider the \emph{line source} benchmark for the time-dependent $M_1$ model~\cite{brunner2005}.
This experiment corresponds to a Green function problem, in which an isotropic, instantaneous pulse of radiation is emitted from a line source at the center of the two-dimensional domain $\D = (-0.5,0.5)^2$.  
The exact solution is radially symmetric and features a steep shock-like front, which makes it a challenging test for numerical methods.

%The original setup in~\cite{brunner2005} features a purely scattering medium.
While the original setup in~\cite{brunner2005} models radiative transfer in a purely scattering medium, we adopt a vacuum configuration ($\sigma_s = \sigma_a = 0$) for a better comparison with the limiting strategies that were applied to the $M_1$ model in~\cite{chidyagwai2018}.
%However, for a better comparison to other limiting strategies in applied to the $M_1$ model in~\cite{chidyagwai2018} we chose the vacuum setup, i.e., $\sigma_s = \sigma_a = 0$. 
Furthermore, we assume that no particles are created and set $q = 0$.
The initial condition is given by a smooth approximation of a Dirac delta distribution
\begin{equation*}
\pz(0,x, y) = \max\left( \exp\left( - 10\frac{x^2 + y^2}{\theta^2} \right), 10^{-4} \right), \quad \po(0,x, y) = 0,
\end{equation*}
where $\theta = 0.02$.
Since the wave does not reach the boundary during the simulation with the final time $t_{\mathrm{final}} = 0.45$, no boundary conditions are required.

\begin{figure}[h!]
	\centering
	\begin{subfigure}[c]{0.45\textwidth}
		\includegraphics[trim={14cm 0cm 10.5cm 0cmm},clip, width = 0.95\textwidth]{./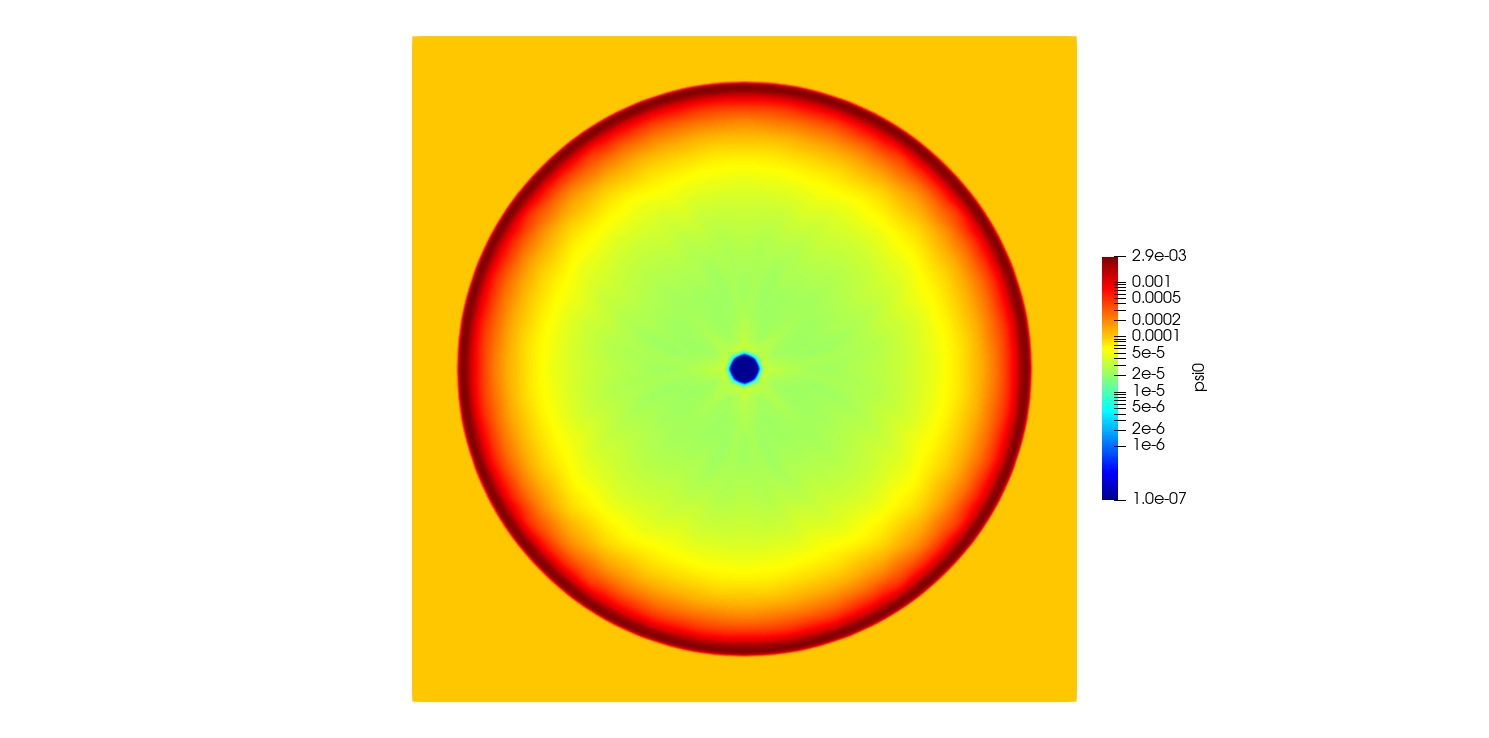}
		\subcaption{$\pz$ (logarithmic scale)}
	\end{subfigure}
	\begin{subfigure}[c]{0.45\textwidth}
		\includegraphics[trim={14cm 0.5cm 10.6cm 0cmm},clip, width = 0.95\textwidth]{./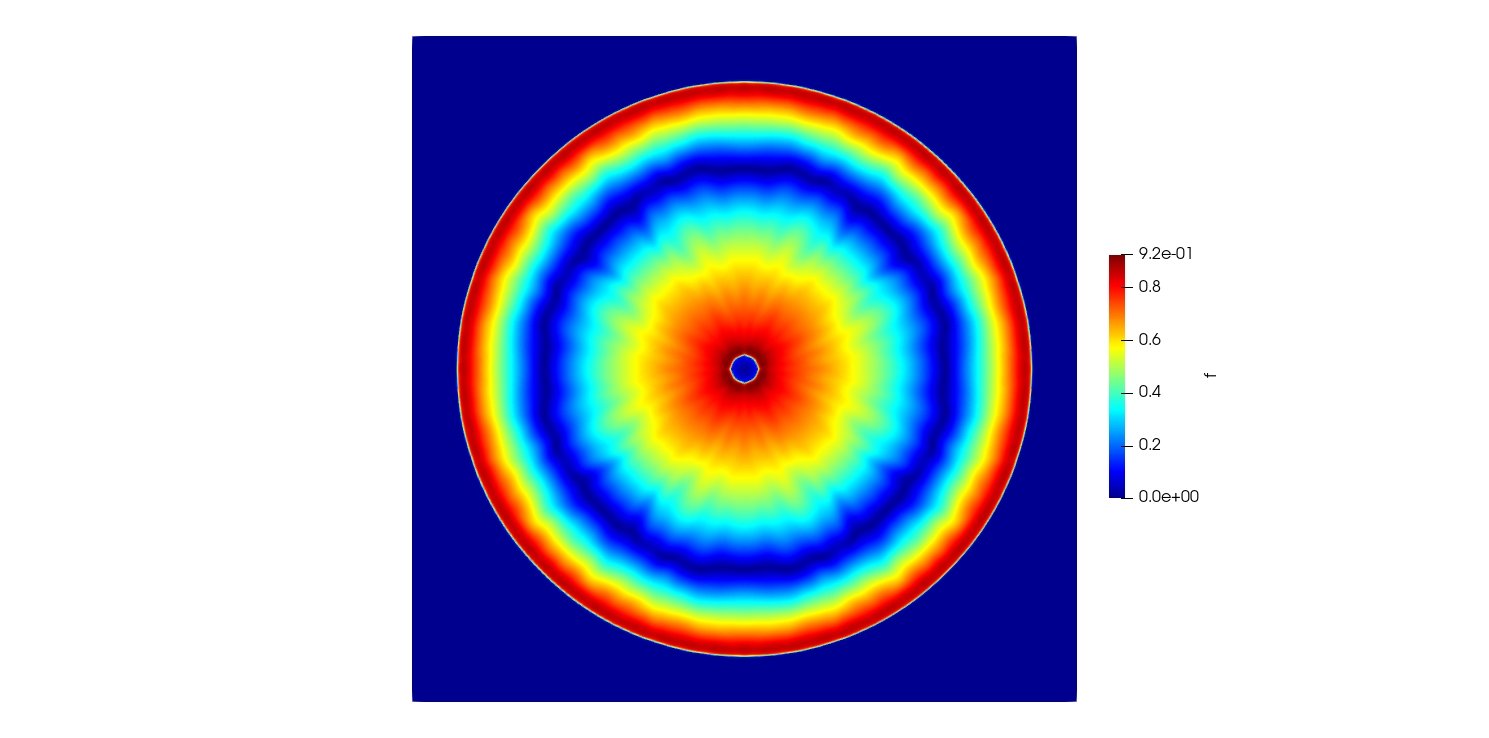}
		\subcaption{$f =\frac{|\po|}{\pz}$}
	\end{subfigure}
	\caption{Line source simulation results at $t = 0.45$ computed with the MCL scheme using a uniform rectangular mesh with $N_h = 512^2$ nodes per component and $\mathrm{CFL} = 0.5$.}
    \label{fig:linesource}
\end{figure}

This problem is particularly sensitive to numerical artifacts, which can lead to a loss of symmetry or a violation of realizability (see, e.g.,~\cite{chidyagwai2018}).  
As shown in Fig.~\ref{fig:linesource}, the proposed MCL scheme resolves the shock in a sharp and stable manner.  The rotational symmetry is preserved and physical admissibility is maintained throughout the simulation without introducing excessive numerical diffusion.

\subsection{Flash test}

Another homogeneous benchmark is the \emph{flash} test~\cite{kanno2013}.
This experiment simulates a bulk of mass moving from the center of the domain $\D = (-10,10)^2$ to the right boundary.
Let $\D_{\frac{1}{2}} = \{(x,y)\in \R^2: \sqrt{x^2 +y^2}\leq \frac{1}{2}\}$ be the disc centered at the origin with radius $r=\frac{1}{2}$.
The initial condition
\begin{equation*}
    u(x,y) = (\pz, \psi^{(1)}_x, \psi^{(1)}_y)^\top = \begin{cases}
        (1, 0.9, 0)^\top &\text{if } (x,y)\in \D_{\frac{1}{2}},\\
        (10^{-10}, 0, 0)^\top &\text{otherwise}
    \end{cases}
\end{equation*}
is close to the boundary of the realizable set $\mathcal R_1$, since $f = \frac{|\po|}{\pz} = 0.9$ on $\D_\frac{1}{2}$.

As mentioned above, we consider the homogeneous $M_1$ system in this test, i.e.,
\begin{equation*}
    q = 0,\quad \sigma_a = \sigma_s = 0
\end{equation*}
in the whole domain.
We run the simulation until $t_{\mathrm{final}} = 6$.
Since the moving mass does not reach the boundary
for $t\le t_{\mathrm{final}}$, no boundary conditions need to be prescribed.

\begin{figure}[h!]
	\centering
	\begin{subfigure}[c]{0.45\textwidth}
		\includegraphics[trim={14cm 0cm 10.5cm 0cmm},clip, width = 0.95\textwidth]{./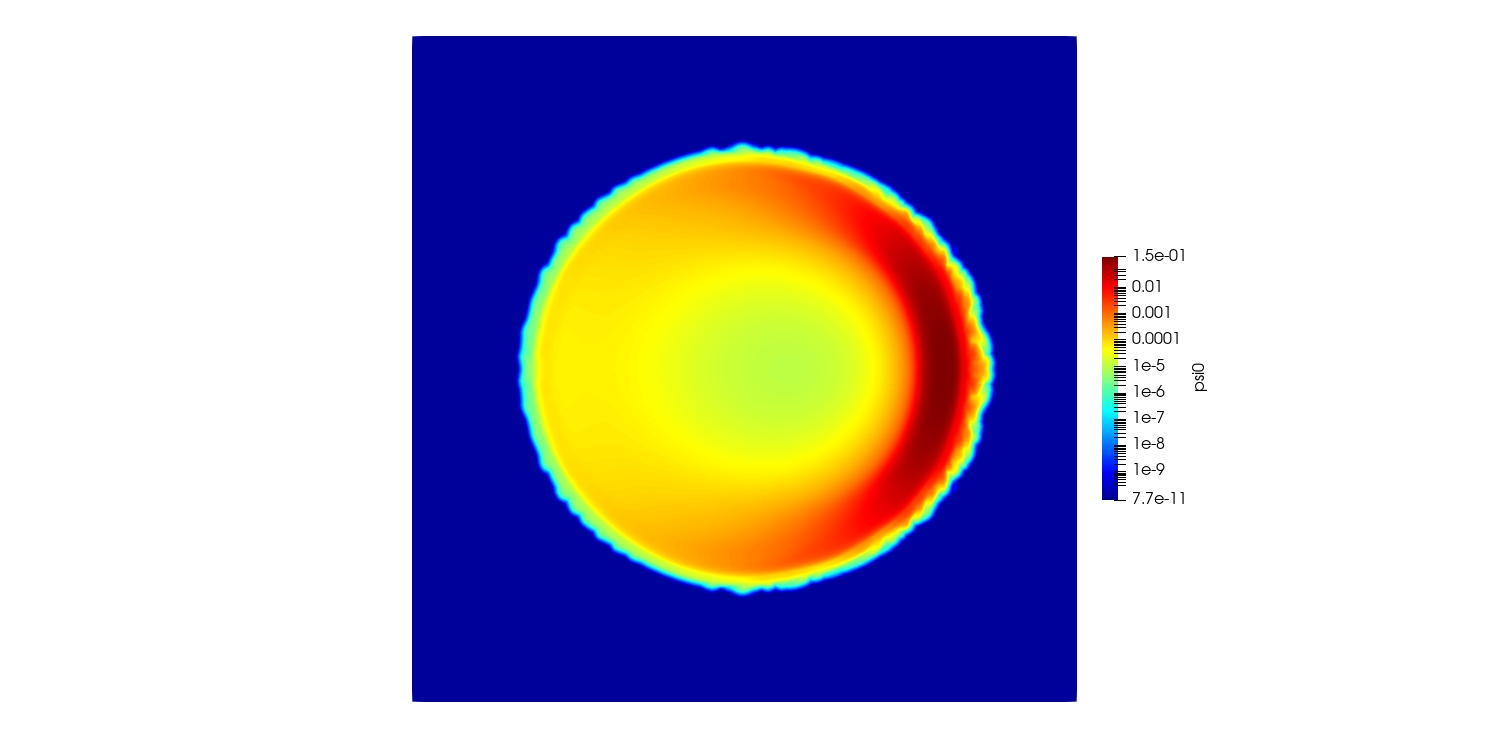}
		\subcaption{$\pz$ (logarithmic scale)}
        \label{fig:flashtest_psi0}
	\end{subfigure}
	\begin{subfigure}[c]{0.45\textwidth}
		\includegraphics[trim={14cm 0cm 10.5cm 0cmm},clip, width = 0.95\textwidth]{./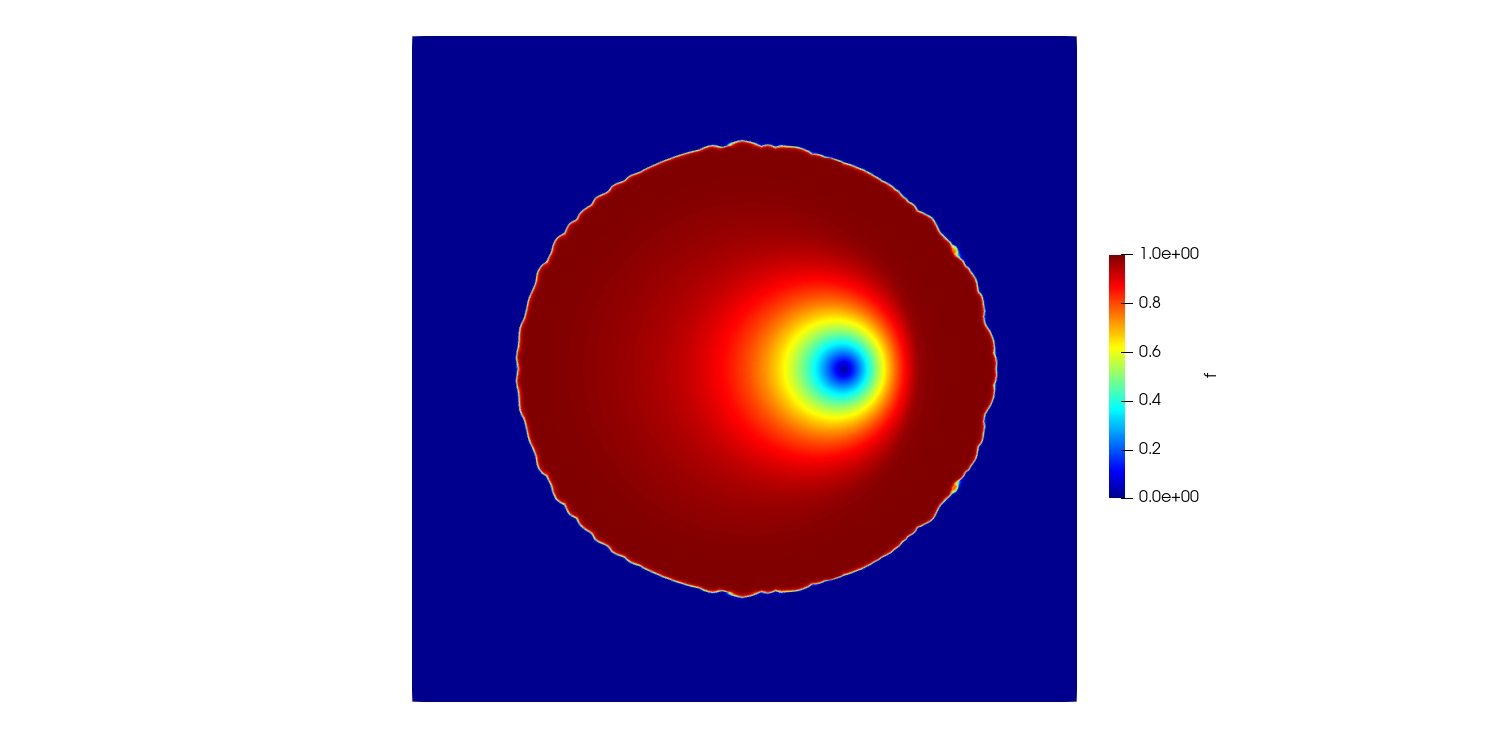}
		\subcaption{$f =\frac{|\po|}{\pz}$}
        \label{fig:flashtest_f}
	\end{subfigure}
	\caption{Flash simulation results at $t = 6$ computed with the MCL scheme using a uniform rectangular mesh with $N_h = 512^2$ nodes per component and $\mathrm{CFL} = 0.5$.}
    \label{fig:flashtest}
\end{figure}

Figure~\ref{fig:flashtest} shows the numerical results for this benchmark.
 The numerical solutions
 displayed in Fig.~\ref{fig:flashtest_f} are very close to the
boundary of $\mathcal R_1$ in large parts of the computational domain. 
In fact, $f\leq 1 - 2.32\times10^{-9}$, which makes this problem very difficult and emphasizes the importance of the IDP fix proposed in Section~\ref{sec:MCL}.
No unacceptable states were detected throughout the computation.

\subsection{Homogeneous disk}

Next, we consider the \emph{homogeneous disk} test~\cite{chidyagwai2018}, in which  
 a static homogeneous radiating region is embedded in vacuum.
 We define the computational domain as $\D = (-5,5)^2$ and evolve
 the moments up to the final time $t_{\mathrm{final}} = 3$.
Let $\D_1 = \{(x, y) \in \R^2 : x^2 + y^2 \leq 1 \}$ denote the unit disk.  
The material parameters and the source term of the $M_1$ model are given by
\begin{align}
\sigma_a(x, y) &= \begin{cases} 10 & \text{if } (x,y) \in \D_1, \\ 0 & \text{otherwise}, \end{cases}\quad \sigma_s(x, y) = 0, \\
q^{(0)}(x, y) &= \begin{cases} 1 & \text{if } (x,y) \in \D_1, \\ 0 & \text{otherwise}, \end{cases}\quad \mathbf{q}^{(1)}(x, y) = 0,
\end{align}
respectively.
The discontinuity of material parameters on the boundary of the unit disk $\D_1$ makes this problem numerically challenging. 
The initial conditions 
\begin{equation*}
\pz(x,y,0) = 10^{-10}, \quad \po(x,y,0) = 0
\end{equation*}
correspond to background radiation with low constant intensity.
Again, since the wave originating from the source does not reach the boundary during the simulation run, no boundary conditions are needed.
As in the line source problem, the solution is expected to be radially symmetric.

\begin{figure}[h!]
	\centering
	\begin{subfigure}[c]{0.45\textwidth}
		\includegraphics[trim={14cm 0cm 11cm 0cm},clip, width = 0.95\textwidth]{./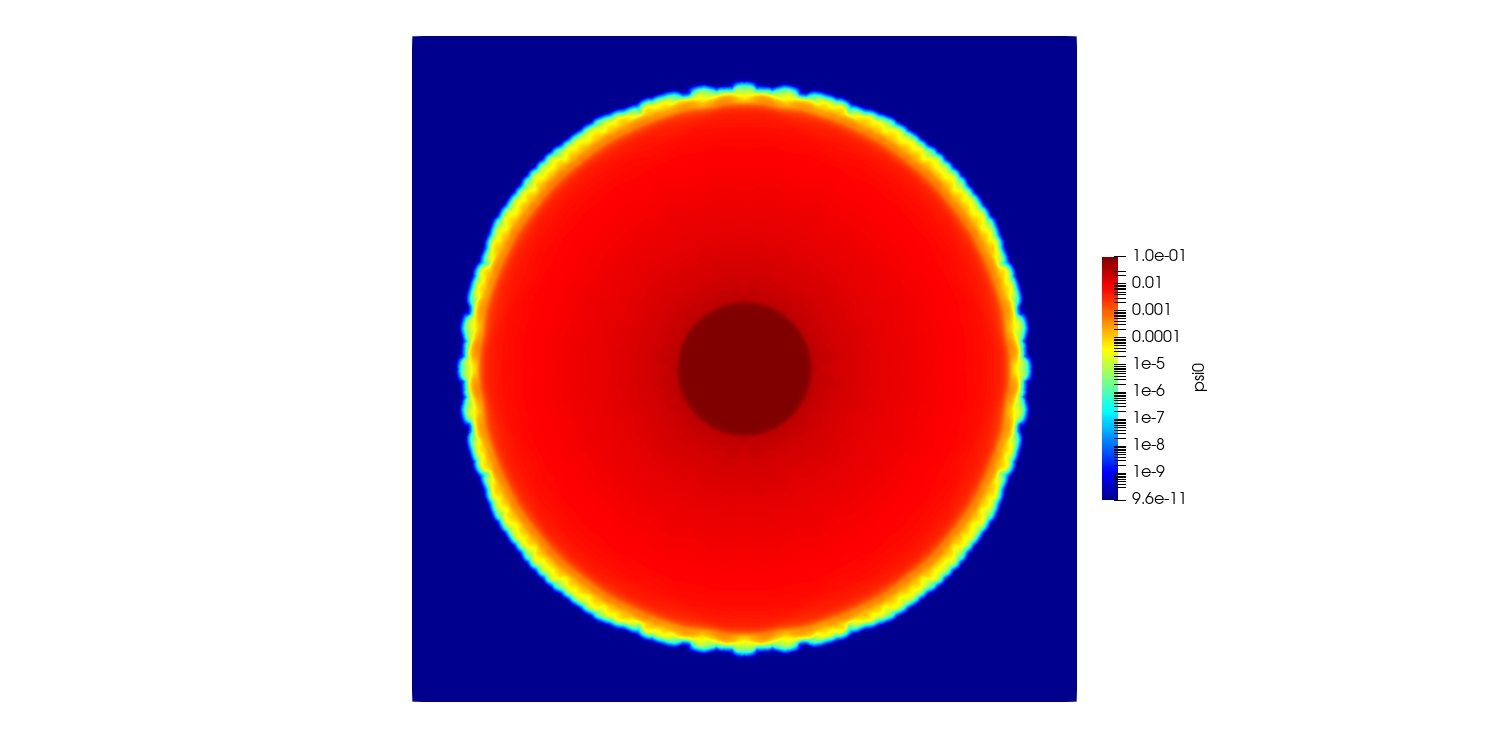}
		\subcaption{$\pz$ (logarithmic scale)}
	\end{subfigure}
	\begin{subfigure}[c]{0.45\textwidth}
		\includegraphics[trim={14cm 0.5cm 10.6cm 0cm},clip, width = 0.95\textwidth]{./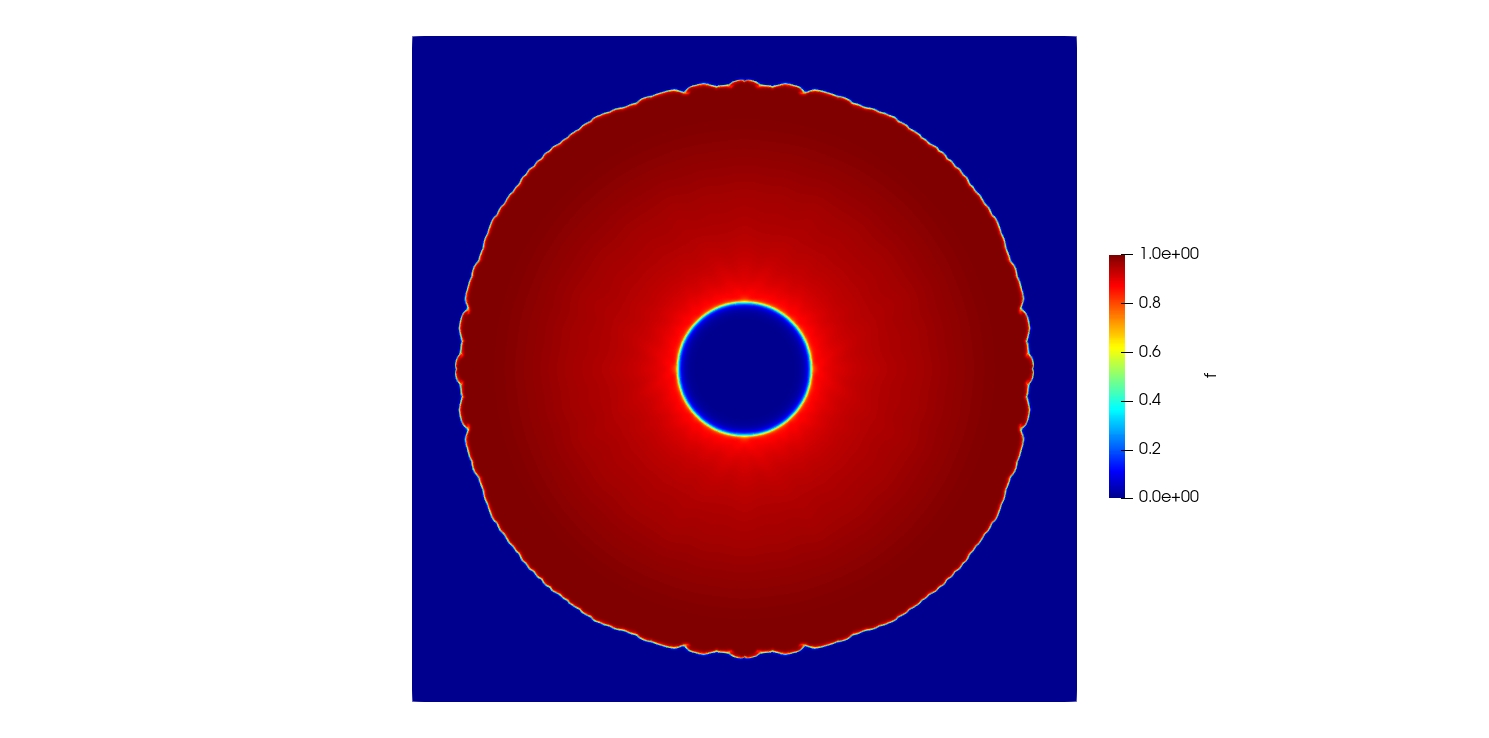}
		\subcaption{$f =\frac{|\po|}{\pz}$}
	\end{subfigure}
	\caption{Homogeneous disk simulation results at $t = 3.0$ computed with the MCL scheme using a uniform rectangular mesh with $N_h = 512^2$ nodes per component and $\mathrm{CFL} = 0.5$.}
    \label{fig:disk}
\end{figure}

The MCL results presented in Fig.~\ref{fig:disk} are nonoscillatory, realizable, and exhibit high resolution of the discontinuities caused by the abrupt change of the forcing terms across the boundary of the disc $\mathcal D_1$. Minor deviations from the exact circular shape of the outer interface can be attributed to componentwise limiting and/or lack of high-order nonlinear stabilization in the target scheme.

\subsection{Lattice problem}
Another challenging benchmark with discontinuous material parameters is the \emph{lattice} problem introduced in~\cite{brunner2002}. The computational domain $\D = (0,7)^2$ is filled with a scattering background medium and an array of highly absorbing materials that are arranged in a checkerboard pattern.
\begin{figure}[h!]
	\centering
		\includegraphics[trim={14cm 0cm 14cm 0cmm},clip, width = 0.4\textwidth]{./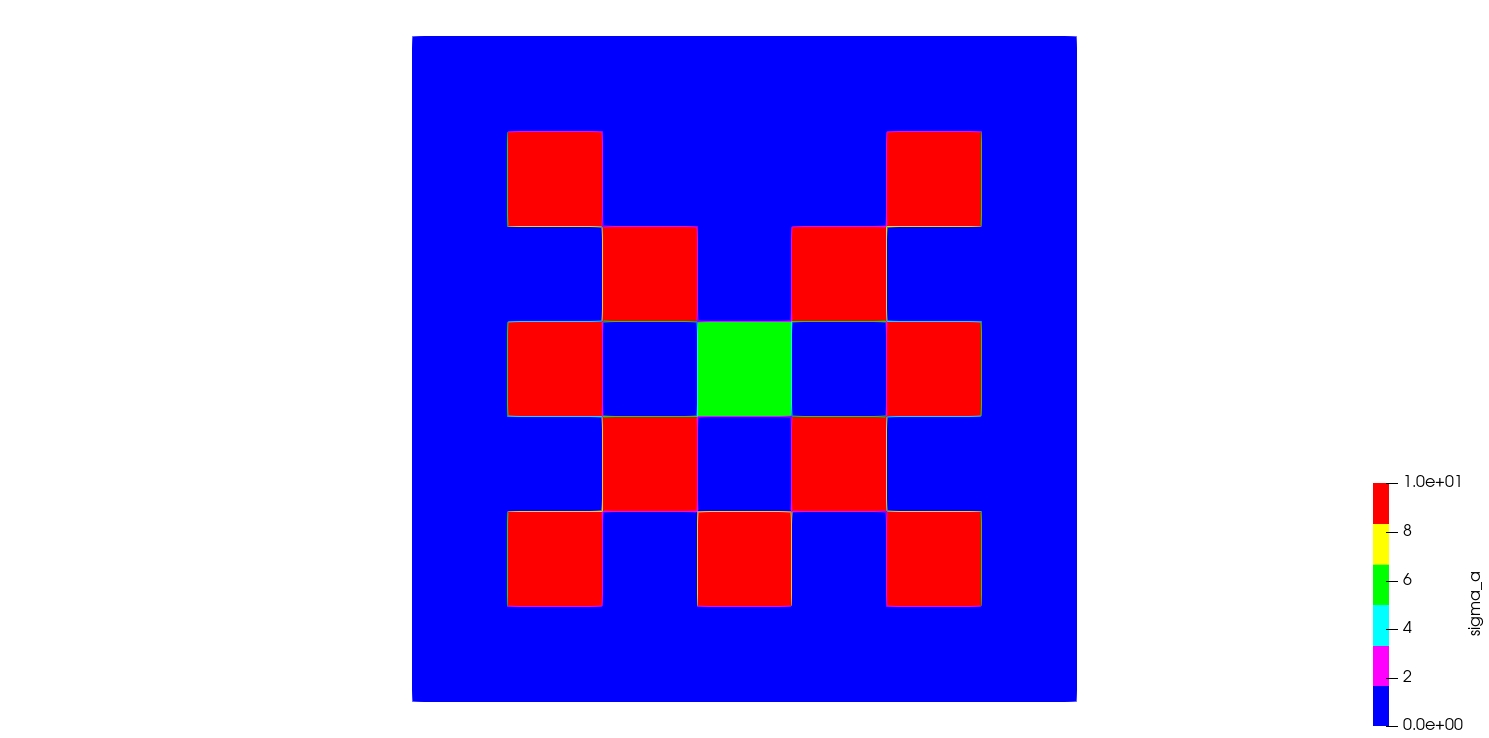}
		\caption{Lattice problem setup: absorbing region $\D_a$ is shown in red; sources are concentrated in the green region.}
        \label{fig:Lattice_domain}
\end{figure}
We illustrate the structural distribution of material properties in Fig.~\ref{fig:Lattice_domain}, where we plot the absorbing region
\begin{equation}\label{eq:D_a}
    \begin{split}
        \D_{a} =\ & \bigl( [1,2] \cup [5,6] \bigr)\times \bigl( [1,2] \cup [3,4] \cup [5,6] \bigr) \\
        \cup\ & \bigl([2,3] \cup [4,5]\bigr) \times \bigl([2,3] \cup [4,5] \bigr)  \\
        \cup\ & [3,4] \times [1,2]
    \end{split}
\end{equation}
in red.
Using~\eqref{eq:D_a}, we define the absorption and scattering parameters as
\begin{equation*}
    \sigma_a(x,y) = \begin{cases}
        10 &\text{if } (x,y) \in \D_{a},\\
        0 & \text{otherwise},
    \end{cases}
    \quad \sigma_s(x,y) =\begin{cases}
        1 & \text{if } (x,y) \in \D\setminus \D_{a},\\
        0 & \text{otherwise}.
    \end{cases}
\end{equation*}
%The source region is highlighted by the green color in Fig.~\ref{fig:Lattice_domain}.
The particle source term
\begin{equation}\label{eq:lattice_source1}
        \quad q^{(0)}(x,y) =\begin{cases}
            1 & \text{if } (x,y) \in [3,4]\times[3,4],\\
            0 & \text{otherwise},
        \end{cases}\qquad \mathbf{q}^{(1)}\equiv0
\end{equation}
of the original benchmark is isotropic.
To further demonstrate the realizability of the proposed MCL scheme for general sources $q\in\overline{\mathcal{R}_1}$, we perform a second test with the
 anisotropic particle source
\begin{equation}\label{eq:lattice_source2}
        \quad q^{(0)}(x,y) =\begin{cases}
            1 & \text{if } (x,y) \in [3,4]\times[3,4],\\
            0 & \text{otherwise},
        \end{cases}\qquad \mathbf{q}^{(1)}(x,y)=\begin{cases}
             (0, -1)^\top & \text{if } (x,y) \in [3,4]\times[3,4],\\
            (0,0)^\top & \text{otherwise}.
        \end{cases}
\end{equation}
Note that the state defined by~\eqref{eq:lattice_source2} lies on the boundary of the realizable set $\mathcal R_1$.
It corresponds to the moments of an angular delta distribution given by $Q(\OO)=\delta(\OO+\mathbf{e}_2)$, where $\mathbf{e}_2$ is the unit vector in the positive $y$-direction.
This setup represents a perfectly collimated beam traveling downward.

We prescribe a do-nothing boundary condition at the outlet of $\mathcal D$.
Using the
initial condition 
\begin{equation}\label{eq:Lattice_init}
\pz(x,y,0) = 10^{-10}, \quad \po(x,y,0) = 0,
\end{equation}
we perform transient and steady-state computations for sources defined by~\eqref{eq:lattice_source1} and~\eqref{eq:lattice_source2}.
In the transient scenarios, the moments are evolved up to the final time $t_{\mathrm{final}} = 3.2$. For steady-state computations, we use single-stage pseudo-time stepping and the residual-based stopping criterion
\begin{equation*}
    \|r_h\|_{L^2(\D)} \leq 10^{-8},
\end{equation*}
where
\begin{equation*}
%\begin{split}
   r_h = \sum_{i=1}^{N_h} r_i \varphi_i,\qquad
   r_i =\frac{1}{m_i}\left(\tilde{b}_i(u_i, \hat u_i) +\sum_{j \in\N_i^*}2d_{ij} (\bar u_{ij}^{\mathrm{IDP}} -u_i) - m_i^{\sigma}u_i + s_i \right) 
%\end{split}
\end{equation*}
is the finite element function corresponding to the time derivative of the MCL solution.

\begin{figure}[h!]
	\centering
	\begin{subfigure}[c]{0.45\textwidth}
		\includegraphics[trim={14cm 0cm 10.67cm 0cm},clip, width = 0.95\textwidth]{./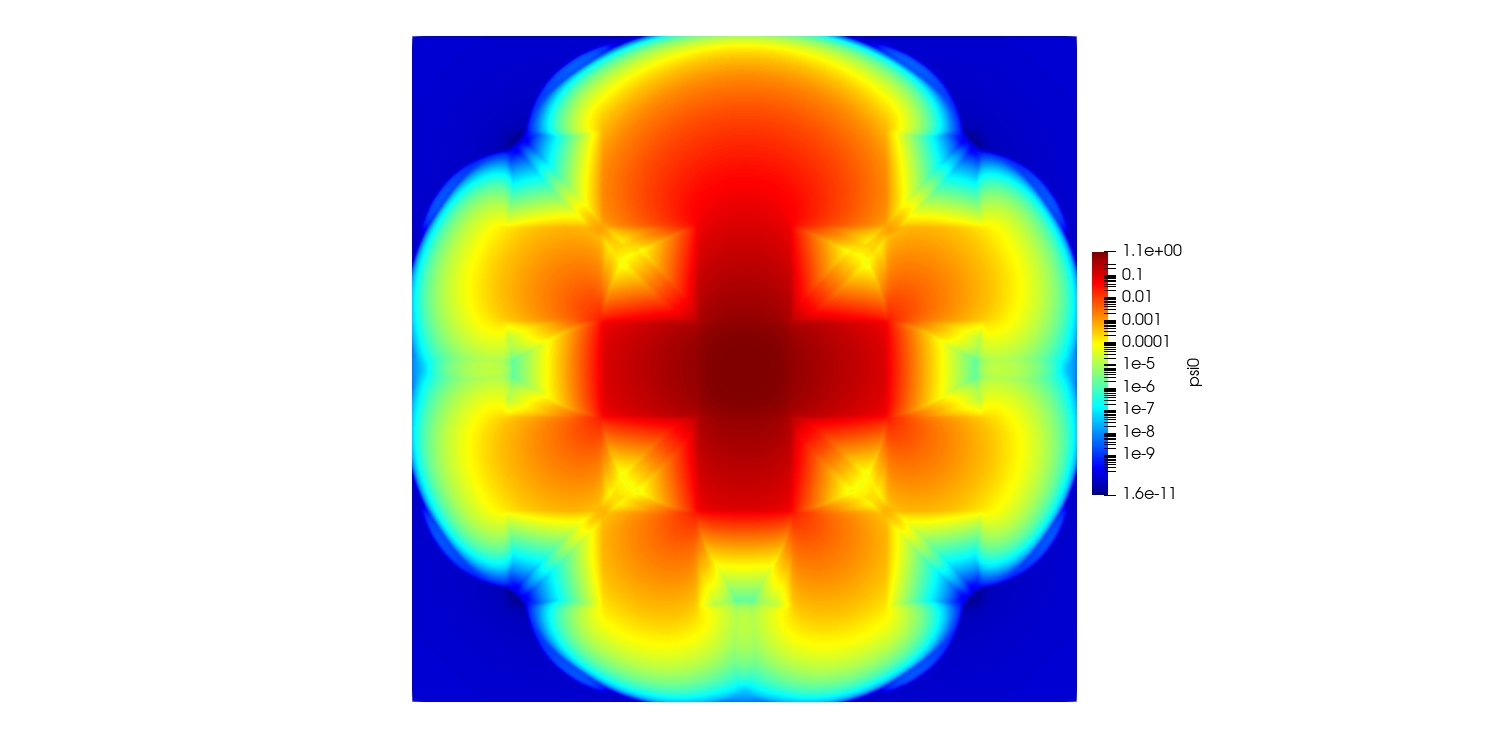}
		\subcaption{$\pz$ (logarithmic scale), isotropic source~\eqref{eq:lattice_source1}}
	\end{subfigure}
	\begin{subfigure}[c]{0.45\textwidth}
		\includegraphics[trim={14cm 0cm 10.67cm 0cm},clip, width = 0.95\textwidth]{./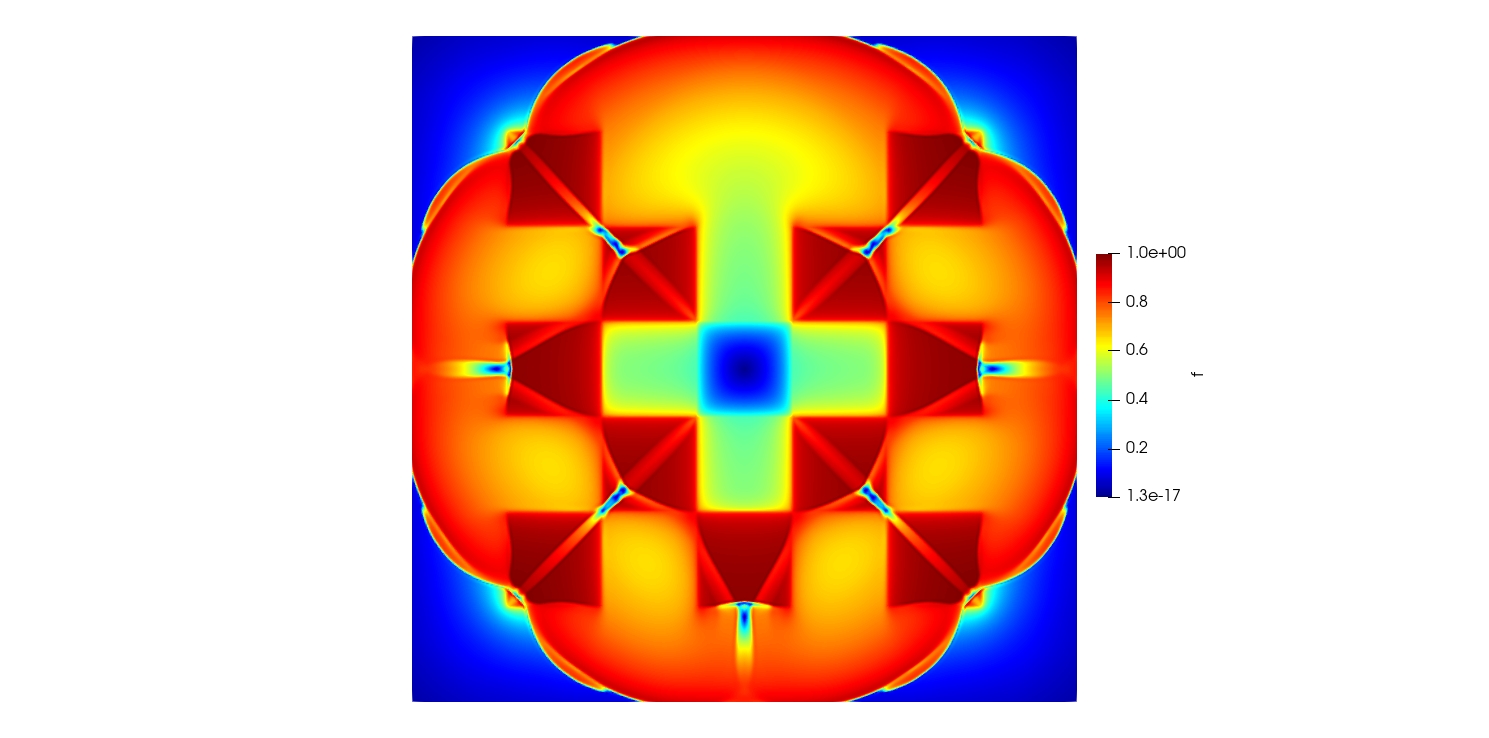}
		\subcaption{$f =\frac{|\po|}{\pz}$, isotropic source~\eqref{eq:lattice_source1}}
	\end{subfigure}
        \begin{subfigure}[c]{0.45\textwidth}
		\includegraphics[trim={14cm 0cm 10.67cm 0cm},clip, width = 0.95\textwidth]{./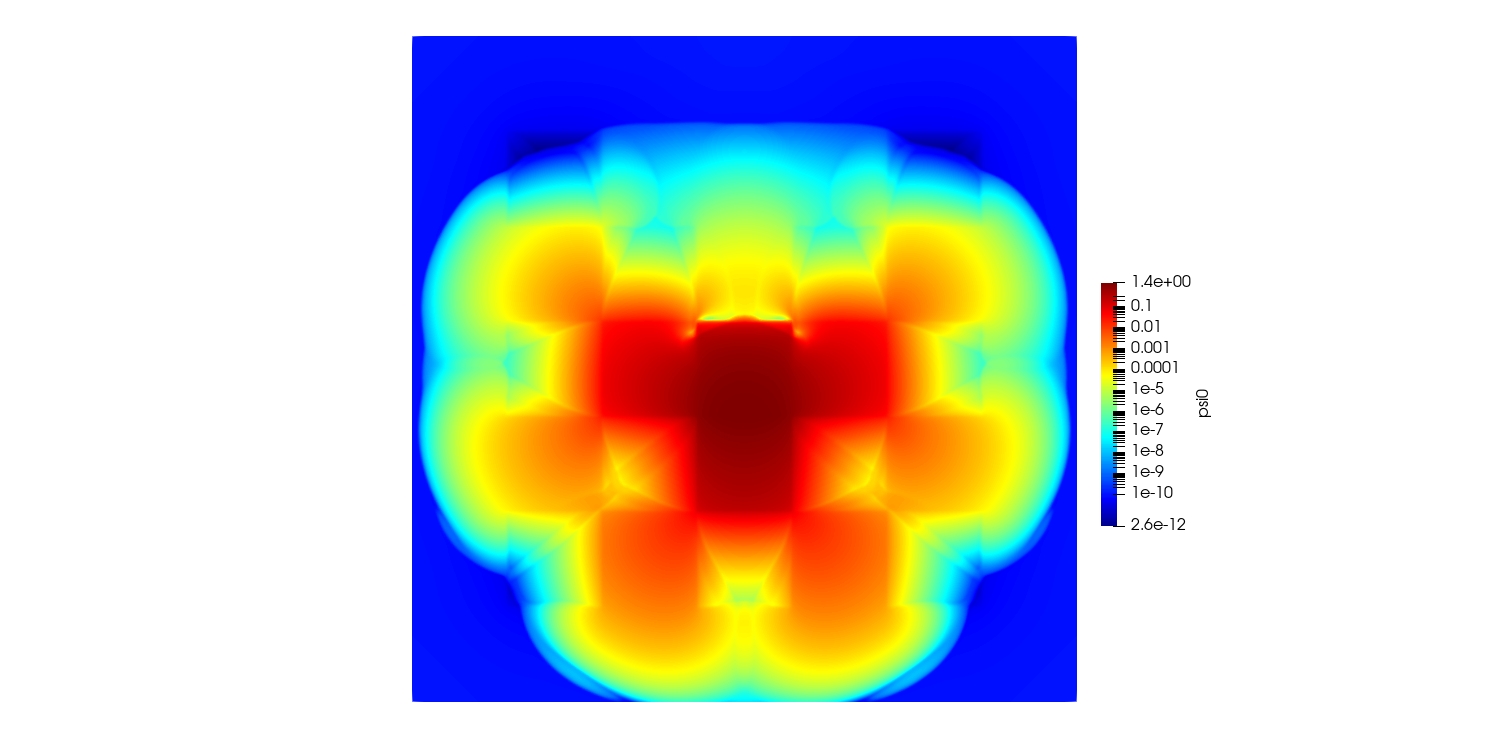}
		\subcaption{$\pz$ (logarithmic scale), anisotropic source~\eqref{eq:lattice_source2}}
	\end{subfigure}
	\begin{subfigure}[c]{0.45\textwidth}
		\includegraphics[trim={14cm 0cm 10.67cm 0cm},clip, width = 0.95\textwidth]{./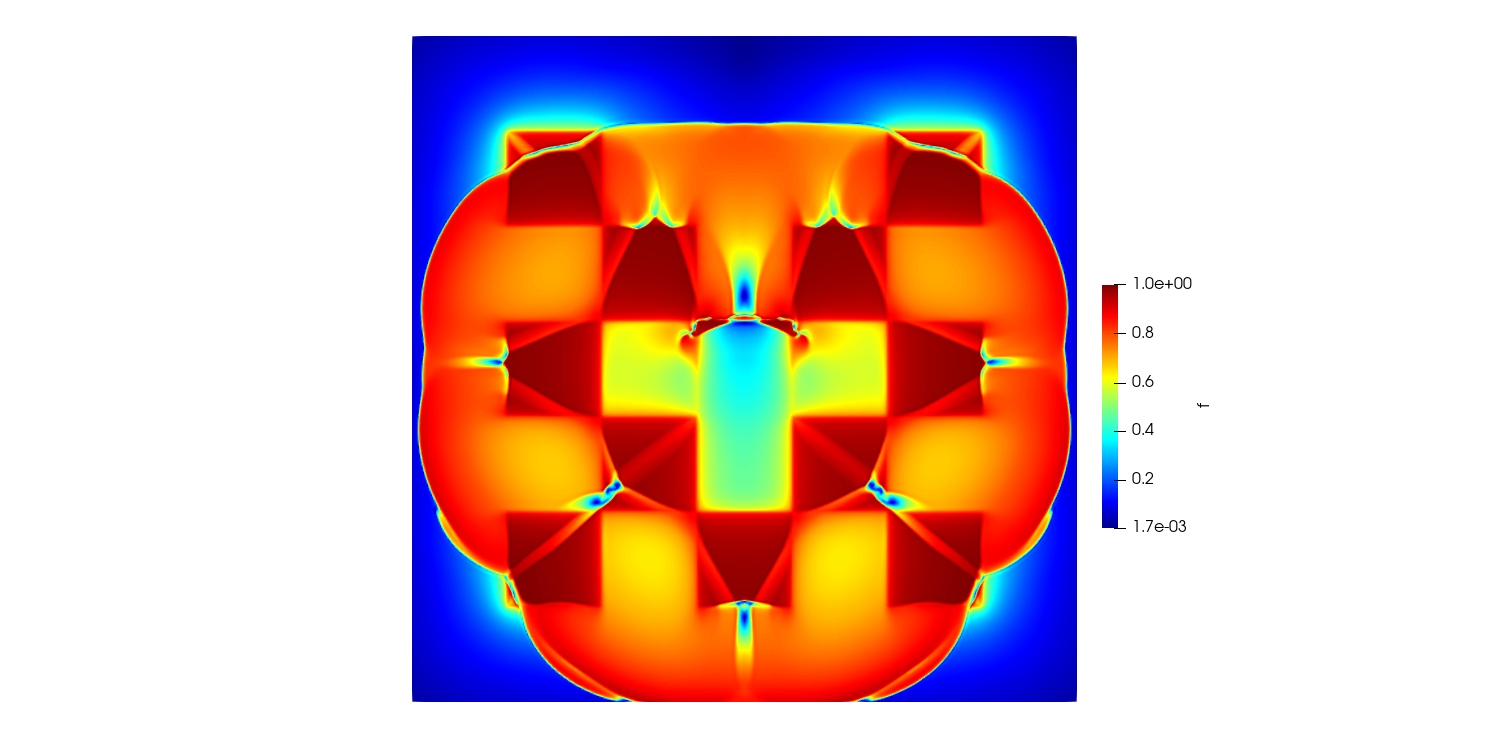}
		\subcaption{$f =\frac{|\po|}{\pz}$, anisotropic source~\eqref{eq:lattice_source2}}
	\end{subfigure}
	\caption{Transient lattice simulation results at $t = 3.2$ computed with the MCL scheme using a uniform rectangular mesh with $N_h = 512^2$ nodes per component and $\mathrm{CFL} = 0.5$. In the test (a,b), the source term was defined by~\eqref{eq:lattice_source1}, while ~\eqref{eq:lattice_source2} was used in the test (c,d).}
    \label{fig:transientLattice}
\end{figure}

\begin{figure}[h!]
	\centering
	\begin{subfigure}[c]{0.45\textwidth}
		\includegraphics[trim={14cm 0cm 10.67cm 0cm},clip, width = 0.95\textwidth]{./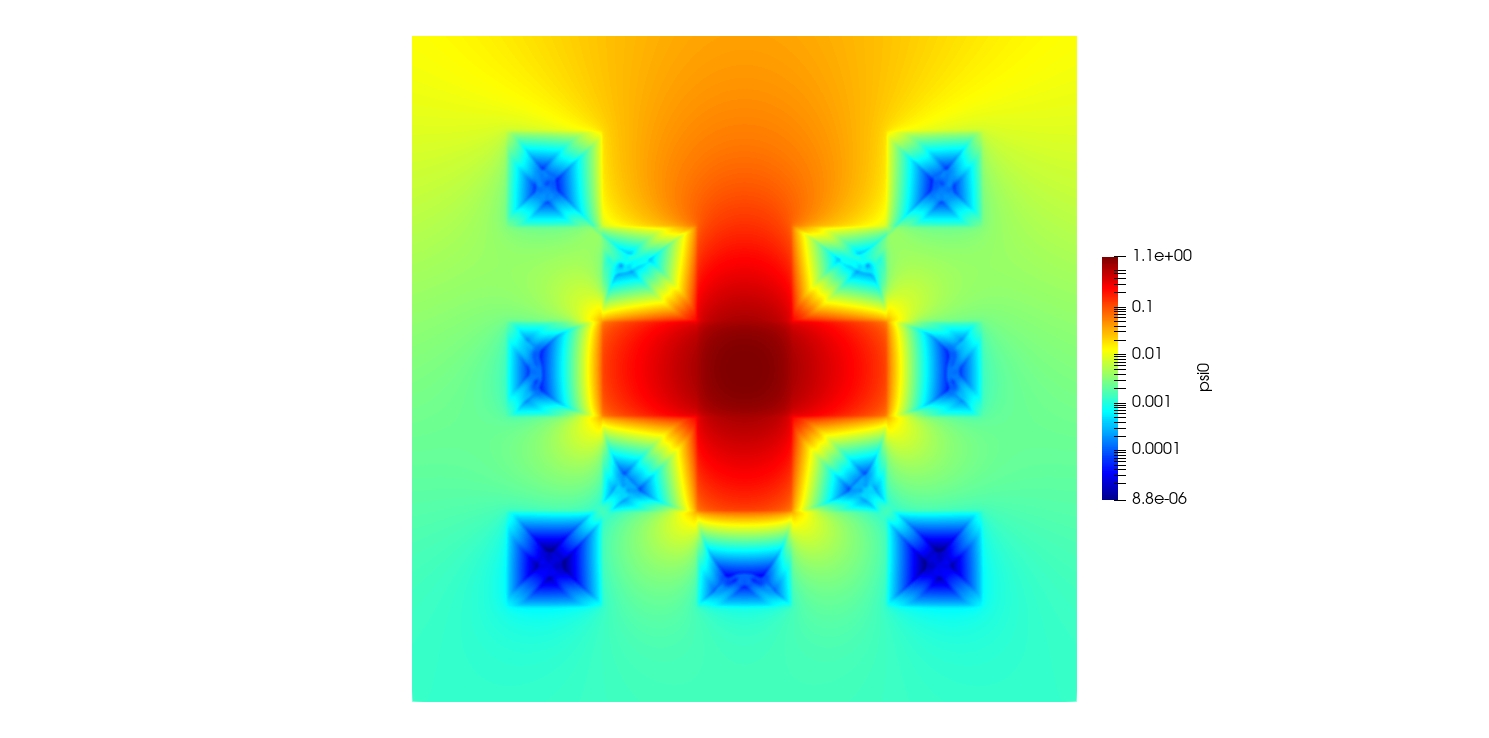}
		\subcaption{$\pz$ (logarithmic scale), isotropic source~\eqref{eq:lattice_source1}}
	\end{subfigure}
	\begin{subfigure}[c]{0.45\textwidth}
		\includegraphics[trim={14cm 0cm 10.67cm 0cm},clip, width = 0.95\textwidth]{./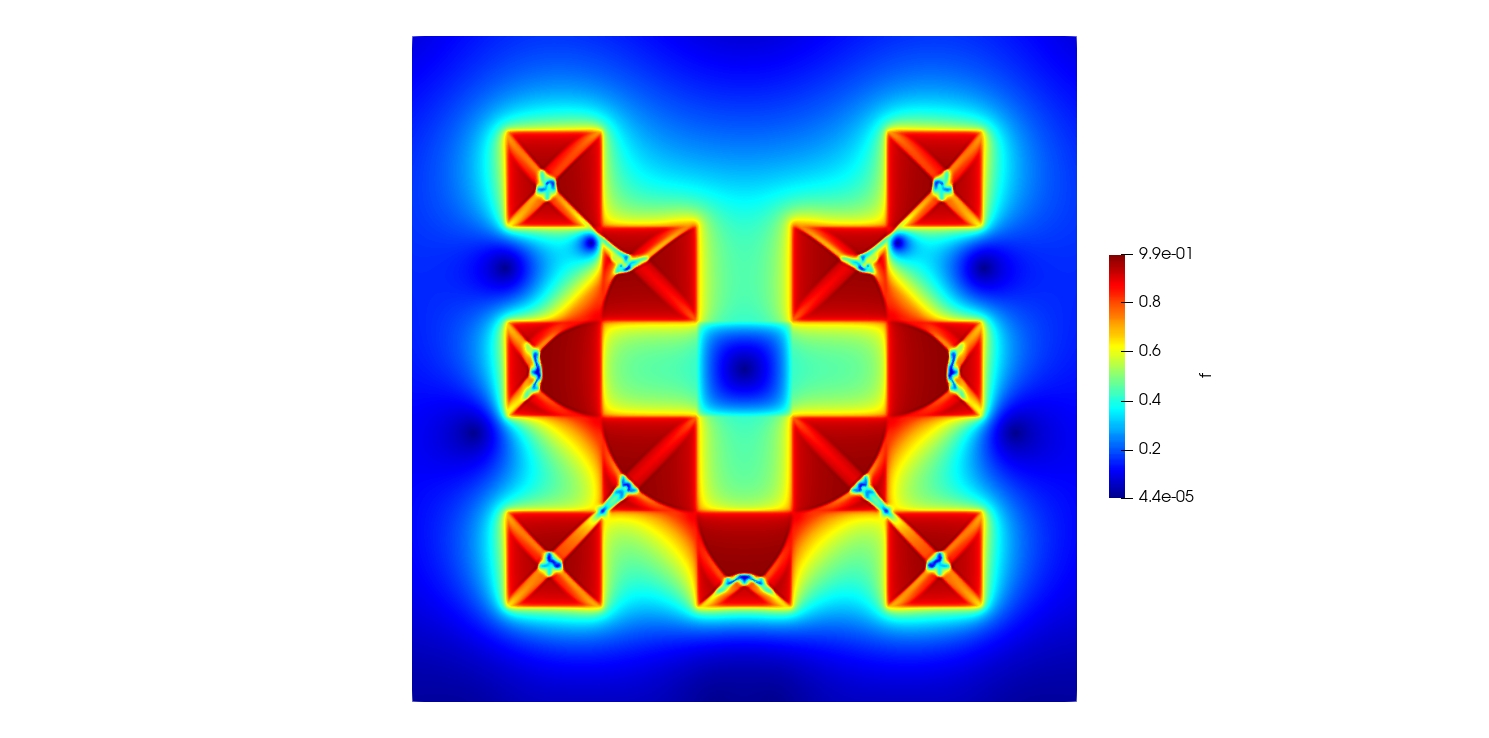}
		\subcaption{$f =\frac{|\po|}{\pz}$, isotropic source~\eqref{eq:lattice_source1}}
	\end{subfigure}
        \begin{subfigure}[c]{0.45\textwidth}
		\includegraphics[trim={14cm 0cm 10.67cm 0cm},clip, width = 0.95\textwidth]{./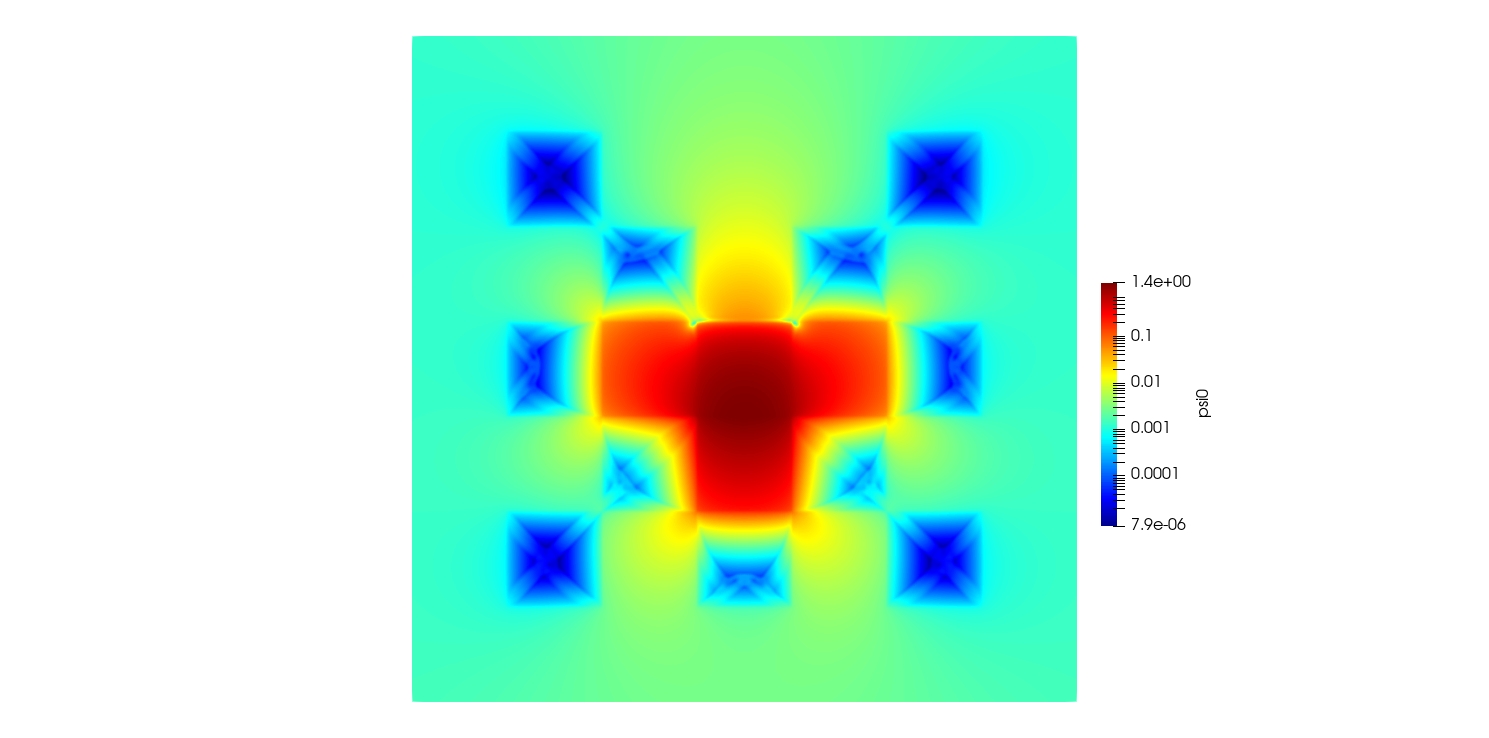}
		\subcaption{$\pz$ (logarithmic scale), anisotropic source~\eqref{eq:lattice_source2}}
	\end{subfigure}
	\begin{subfigure}[c]{0.45\textwidth}
		\includegraphics[trim={14cm 0cm 10.67cm 0cm},clip, width = 0.95\textwidth]{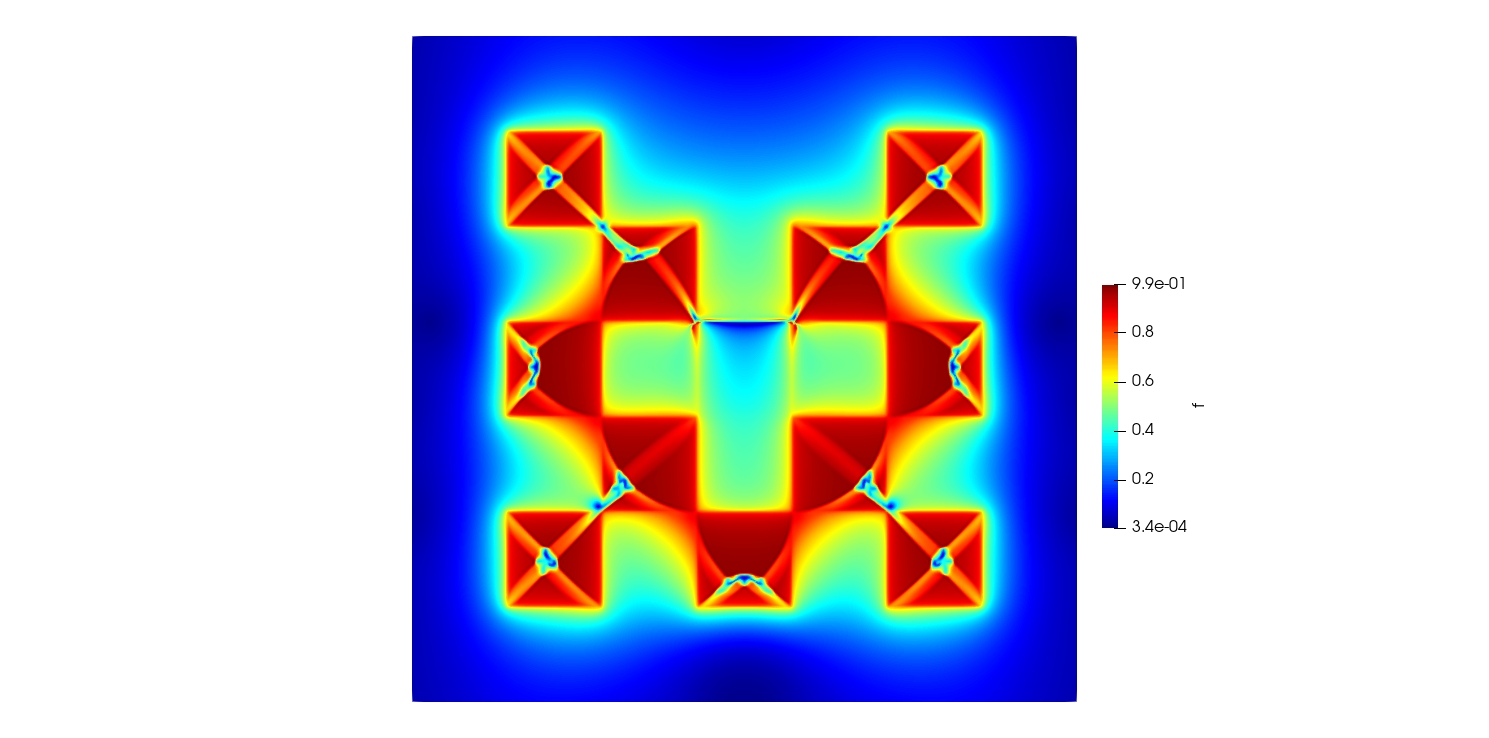}
		\subcaption{$f =\frac{|\po|}{\pz}$, anisotropic source~\eqref{eq:lattice_source2}}
	\end{subfigure}
	\caption{Steady-state Lattice problem computed with the MCL scheme on a uniform rectangular mesh with $N_h = 512^2$ nodes per component and $\mathrm{CFL} = 0.9$. Results for source~\eqref{eq:lattice_source1} (top) and source~\eqref{eq:lattice_source2} (bottom).}
    \label{fig:steadyLattice}
\end{figure}

\begin{figure}[h!]
    \centering
    \includegraphics[trim={0.9cm 0.3cm 2.15cm 1cmm},clip, width = 0.5\textwidth]{./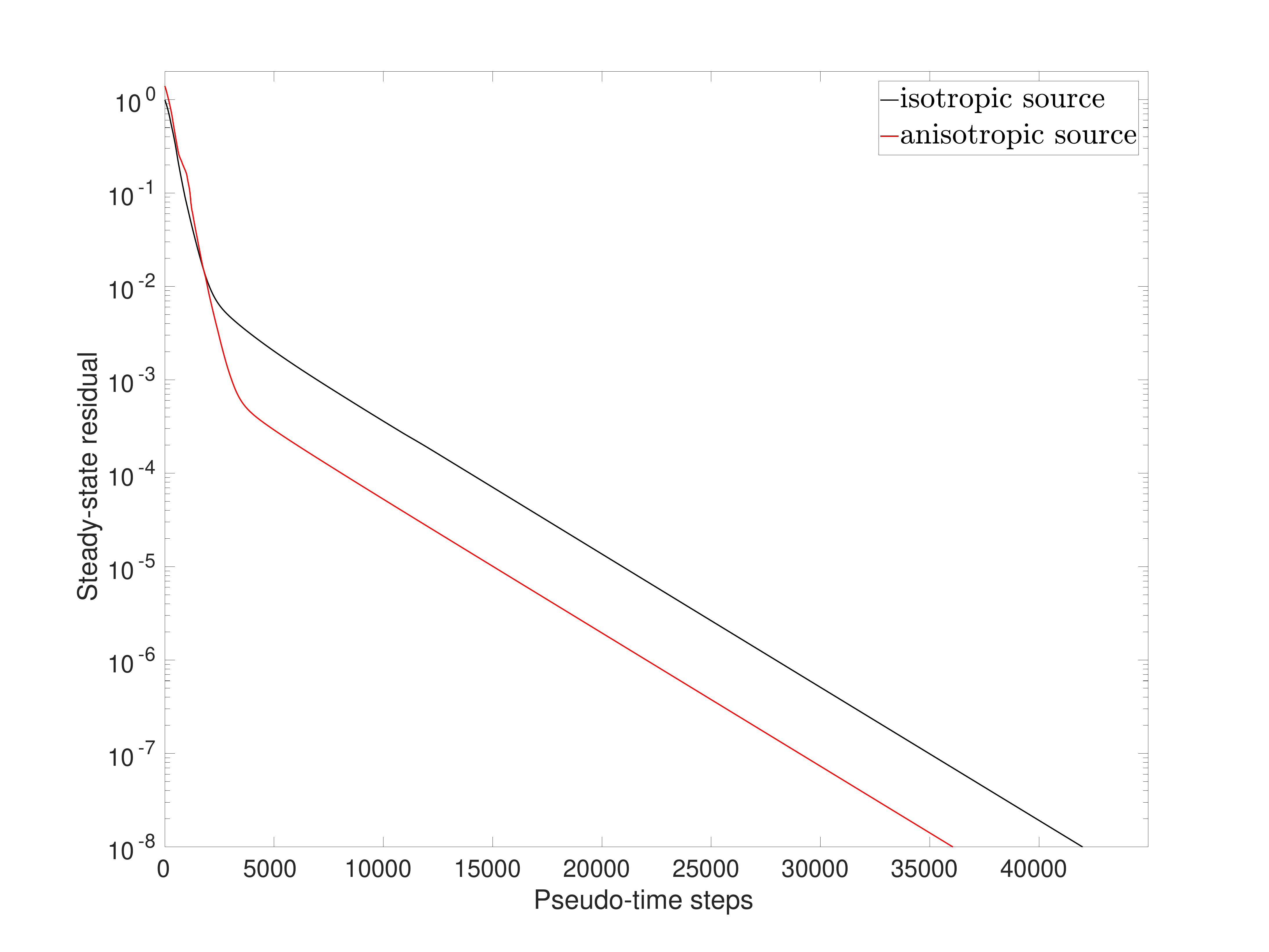}
    \caption{Convergence of steady-state residuals $r_h$ for the MCL discretization of the lattice problem using a uniform rectangular mesh with $N_h = 512^2$ nodes per component and pseudo-time stepping with $\mathrm{CFL} = 0.9$.}
    \label{fig:residual}
\end{figure}

Both setups of this benchmark are very challenging due to the complex structure of the forcing terms.
The strong absorption drives the solution close to the boundary of the realizable set. The particle densities drop below $\pz \leq 10^{-13}$ before the wavefront reaches the absorbing regions.
However, no nonphysical states where detected during any simulation. 
As seen in Figs~\ref{fig:transientLattice} and~\ref{fig:steadyLattice}, the interfaces between the scattering and absorbing media are captured well and no spurious oscillations occur. 

We plot the evolution of the steady-state residuals in Fig.~\ref{fig:residual}.
The initial residual drops rapidly until the radiative wave reaches the boundary of the domain $\mathcal D$.  After that, the MCL solution converges to the steady state in a monotone manner. This behavior demonstrates that the proposed approach is suitable for steady-state computations, the efficiency of which can be enhanced by switching to fully implicit pseudo-time stepping of backward Euler type. The IDP property of implicit MCL schemes can be verified following the analysis performed in \cite{moujaes2025} for the compressible Euler equations.

\section{Discussion}
\label{sec:disc}

Regarding prior studies concerning radiation transport applications, the lack of realizability is an alarming drawback of currently employed deterministic simulation tools, such as standard discrete ordinate / discontinuous Galerkin methods for the LBE \cite{bedford2019,DeMartino2021dose,gifford2006,vassiliev2010}. This has probably hindered the use of such tools in proton therapy. In this field, Monte-Carlo simulations are currently the most accurate method for clinical dose calculations~\cite{JANSON2024,Lin2021,Saini2018,verbeek2021}. Dose engines based on (accurate moment approximations to) the LBE would be ideally suited for secondary in-silico dose checks in the frame of patient-specific quality assurance~\cite{Aitkenhead2020,Magro2022,Dreindl2024}, because their dose calculation algorithm is fundamentally different. Furthermore, LBE-based radiation transport calculations could substantially speed up calculations of the out-of-field dose~\cite{validation}. Eventually, treatment plan optimization could benefit from LBE-based modeling \citep{barnard2012,boman2007radiotherapy}.

%%%%
%%%% Conclusions
%%%%

\section{Conclusions}
\label{sec:concl}

In this paper, we developed a fail-safe limiting framework for enforcing
realizability in continuous finite element methods for the $M_1$ model
of radiative transfer.
To guarantee preservation of invariant domains by our method,
we analyzed exact solutions of projected Riemann problems
and proved that intermediate states of the homogeneous problem stay in the
convex realizable set. Extending this analysis to the inhomogeneous case,
we found that the fully discrete scheme is provably IDP if the source
terms are included in an implicit manner. To achieve high-order accuracy, we
perform conservative IDP corrections of the low-order intermediate states.
The proposed methodology extends the framework of monolithic convex limiting
to the $M_1$ model of radiative transfer. The results of Lemma~\ref{lem:upm}
and Theorem~\ref{thm:IDPbarstates} carry over naturally to higher-order moment
models ($M_N$ with $N \geq 2$).  This observation opens the possibility of
applying MCL to finite element discretizations of such systems. However,
further efforts need to be invested in the design of realizable closures
and tailor-made limiting techniques for high-order moments, such as the
second-order tensor $\pt$. As discussed in \cite[Section~5.2]{kuzmin2020},
the MCL framework makes it possible
 to constrain the local eigenvalue range in this context.
Additional representatives of limiting approaches for tensor fields can be found in
\cite{lohmann2019}. A further
promising research avenue is experimental validation of moment models
and their practical use for reliable
dose computations in clinical radiotherapy (see the discussion in Section \ref{sec:disc}). 

%In particular, for the $M_2$ model, the closure proposed by Pichard et al.~\cite{pichard2017}, together with the tensorial limiting strategy introduced in~\cite{lohmann2017b} for the second moment tensor $\pt\in\R^{d\times d}$, provides a viable framework for realizability-preserving high-order methods.  

%Applications of moment models to charged particle transport, such as in proton therapy~\cite{jonoski2017}, further motivate the development of robust numerical tools.  
%By employing the MCL framework in this context, it becomes possible to solve the associated nonlinear hyperbolic balance laws in a physically consistent and numerically stable manner, offering a promising pathway toward accurate dose computations in clinical radiotherapy for cancer treatment.

%Moreover, there are many proposed closure approaches in the literature~\cite{alldredge2012, brunner2000, dubroca2002, hauck2010, kershaw1976, monreal2013, schneider2015a}.
%Some provide physical consistency, some do not.
%Entropy-based closures can develop nonphysical steady-state shocks, see e.g.~\cite[Ex. 6.2]{monreal2013}.
%As our analysis shows, we can guarantee physical numerical solutions as long as the closure is physical.
%With machine learning rising in popularity, physics informed machine learning tools can be applied to approximate closure relations and thus give better results without the need of higher order moment models, which are computationally expensive.

\section*{Acknowledgments}

The authors are grateful to Dr. Jörg Wulff (West German Proton Therapy Centre Essen) and Prof. Andreas Rupp (Saarland University, Saarbrücken) for helpful remarks regarding physical and computational aspects of radiative transfer modeling.

\bibliographystyle{plain}
\bibliography{bibliography}

\end{document}